\documentclass[12pt]{amsart}
\usepackage{a4wide}                
\usepackage{graphicx}
\usepackage{indentfirst}
\usepackage[latin1]{inputenc}
\usepackage{amsmath,amscd}
\usepackage{amsfonts}
\usepackage{amsthm}
\usepackage{graphics}
\usepackage{amssymb}
\usepackage{enumerate}
\usepackage[usenames,dvipsnames]{pstricks}
\usepackage{epsfig}
\usepackage{pst-grad} 
\usepackage{pst-plot} 
\usepackage[all]{xy}

\newtheorem{thm}{Theorem}

\newtheorem{lemma}[thm]{Lemma}
\newtheorem{proposition}[thm]{Proposition}

\theoremstyle{definition}
\newtheorem{definition}[thm]{Definition}
\theoremstyle{remark}
\newtheorem{remark}[thm]{Remark}

\numberwithin{equation}{section}
\numberwithin{thm}{section}

\newcommand\N{\mathbb{N}}
\newcommand\R{\mathbb{R}}
\newcommand\J{\mathcal{J}}

\newcommand\Z{\mathbb{Z}}

\DeclareMathOperator{\Hom}{Hom}

\DeclareMathOperator{\tr}{tr}
\DeclareMathOperator{\SL}{SL}

\newcommand\Gt{\widetilde{G}}

\newcommand\psl{{\rm PSL}(2 , \R)}
\newcommand\pgl{{\rm PGL}(2 , \R)}

\begin{document}

\title[Components of $\Hom (\pi , \psl )$]
{Connected components of spaces of representations of non-orientable surfaces}

\author{Frederic Palesi}
\address{Institut Fourier, \\
100 rue des maths, 38452 St Martin d'Heres}
\email{fpalesi@ujf-grenoble.fr}
\subjclass{Primary: 57M05; Secondary: 57R20}
\date{\today}
\keywords{}

\begin{abstract}
Let $M$ be a compact closed non-orientable surface. We show that the space of representations of the fundamental group of $M$ into $\psl$ has exactly two connected components. These two components are the preimages of a certain Stiefel-Whitney characteristic class, computed in a similar way as the Euler class in the orientable case. 

\end{abstract}

\maketitle
\tableofcontents

\section{Introduction}

Let $M$ be a compact closed surface and  $G$ be a connected semi-simple Lie group.  The space $\Hom (\pi_1 (M) ,G)$ is the space of representations $\pi_1 (M) \rightarrow G$. An important problem is to find its number of connected components and some topological invariants that separate them. When the surface is orientable, this problem was considered and solved for a large variety of groups $G$ (see \cite{bradlow-garcia-gothen} for a summary of known results). In this paper, we will focus on the case of representations of non-orientable surfaces into the group of isometries of the hyperbolic plane.

The obstruction to lifting a representation $\rho : \pi_1 (M) \rightarrow G$ to the universal cover of $G$ is a characteristic class of $\rho$ , which is an element of $H^2 (M, \pi_1 (G))$. This defines a map:
$$o_2 : \Hom (\pi_1(M) , G ) \longrightarrow H^2 (M, \pi_1 (G)). $$
One knows that:
$$ H^2 (M, \pi_1 (G)) = \left\{ \begin{array}{ll} \pi_1 (G) & \mbox{ if M is orientable,} \\ \pi_1 (G) /2\pi_1 (G) & \mbox{ if M is non-orientable,} \end{array} \right. $$
where $2\pi_1 (G)$ is the subgroup of the abelian group $\pi_1 (G)$ of elements of the form $\{ A^2 | A \in \pi_1 (G) \}$. This map provides a useful tool in the study of the connected components of the representation space $\Hom (\pi_1(M) , G )$, because it is continuous and takes values in a discrete set. Hence, it is constant on the connected components of $\Hom (\pi_1 (M) , G)$ and induces a map also denoted $o_2$:
$$o_2 : \pi_0 (\Hom (\pi_1(M) , G )) \longrightarrow H^2 (M, \pi_1 (G)). $$ 

In \cite{goldman}, William Goldman conjectured that for any connected complex semisimple Lie group $G$, and any orientable surface $M$ of genus $g > 1$, the map $o_2$ is a bijection. Jun Li  proved this conjecture in \cite{li}. In the same paper, he also proved that when the group $G$ is connected compact and semisimple, the map $o_2$ is also a bijection. When $M$ is non-orientable, the problem has been studied by Nan-Kuo Ho and Chiu-chu Melissa Liu in \cite{ho-liu} who proved the bijectivity of $o_2$ when the non-orientable genus of $M$ is not $1, 2$ or $ 4$ and $G$ is a connected compact and semisimple Lie group.

These situations contrast with the case when $G$ is a non-compact real form of a semisimple complex Lie group. The most common example is the group $G = \psl$. The fundamental group of $\psl$ is infinite cyclic, $\pi_1 (G) = \Z$. On the other hand, we know that the space of representation of a finitely presented group into an algebraic semisimple Lie group can have only finitely many connected components.

When the group $G$ is $\psl$ and the surface is orientable, the map $o_2$ coincides with a well-known invariant of representations, the Euler class:
$$e : \Hom (\pi , \psl) \longrightarrow \pi_1(\psl) = \Z .$$
For a representation $\phi$, the Euler class satisfies the Milnor-Wood inequality $|e(\phi)| \leq |\chi (M)|$. The main theorem of \cite{goldman} states that the connected components of $\Hom (\pi , G)$ are the preimages $e^{-1} (n)$, where $n$ is any integer satisfying $|n| \leq 2g - 2$. Thus the Euler class distinguishes the $4g-3$ connected components of the space $\Hom (\pi , \psl)$. Generalizations of this result for orientable surfaces have been studied for a variety of other Lie groups, for example  when $G =  \mbox{PSL} (n , \R)$  (see \cite{hitchin}), $G = \mbox{Sp} (2n , \R)$ (see \cite{burger,garcia}), or when the symmetric space associated to $G$ is Hermitian (see \cite{toledo}).

In this paper, our purpose is to generalize the results of Goldman for $G = \psl$ to non-orientable surfaces. When the surface $M$ is non-orientable, the map $o_2$ is defined as a map:
$$o_2 : \Hom (\pi , G) \longrightarrow \pi_1 (G) / 2\pi_1 (G) = \Z / 2 \Z .$$

The main purpose of this paper is to obtain the following:

\begin{thm}\label{thm:closed}
Let $M$ be a closed non-orientable surface of genus $k$ with $k \geq 3$. Then the map 
$$o_2 : \pi_0 (\Hom (\pi_1 (M) , \psl)) \longrightarrow \Z / 2 \Z$$
given by the obstruction class, is a bijection.

In particular, the set $\Hom (\pi_1 (M) , \psl)$ consists of two connected components, which are the preimages $o_2^{-1} (n)$, where $n \in \mathbb{Z} / 2 \mathbb{Z}$.
\end{thm}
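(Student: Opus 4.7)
I would use the standard presentation
\[
\pi_1(M) = \langle a_1, \ldots, a_k \mid a_1^2 a_2^2 \cdots a_k^2 \rangle,
\]
identifying $\Hom(\pi_1(M),\psl)$ with the subvariety $V_k = \{(A_1,\ldots,A_k) \in \psl^k : A_1^2 \cdots A_k^2 = I\}$. First I would give a concrete formula for $o_2$: for any lifts $\widetilde A_i \in \widetilde{\psl}$ of the $A_i$, the product $\widetilde A_1^2 \cdots \widetilde A_k^2$ lies in the central kernel $\pi_1(\psl) \cong \Z$, and its residue modulo $2\Z$ equals $o_2(\rho)$. This is well-defined because $(\widetilde A_i t)^2 = \widetilde A_i^2 \cdot 2t$ for any central $t \in \Z \subset \widetilde{\psl}$. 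With this in hand the theorem reduces to (i) surjectivity of $o_2$ and (ii) connectedness of each fibre.

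Surjectivity is quick: the trivial representation gives $o_2 = 0$, and sending each $a_i$ to the rotation by $\pi/(2k)$ in $SO(2) \subset \psl$ yields $\prod A_i^2 = R_\pi = I$ in $\psl$ with canonical lifts summing to $\pi$, the generator of $\pi_1(\psl) \cong \Z$, giving $o_2 = 1$.

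The substance of the proof is connectedness of each fibre, which I would prove by induction on $k$ using the crosscap decomposition $N_k \cong N_{k-1} \mathbin{\#} \R P^2$. Under this decomposition a point of $V_k$ is a representation $(A_1,\ldots,A_{k-1})$ of the free group $\pi_1(N_{k-1}\setminus D^2)$ together with a square root $A_k \in \psl$ of the separating holonomy $C = (A_1^2 \cdots A_{k-1}^2)^{-1}$. For generic elliptic $C$ there are exactly two square roots in $\psl$, whose lifts in $\widetilde{\psl}$ differ by the generator of $\pi_1(\psl)$, so the two branches correspond precisely to the two values of $o_2$. Given $\rho,\rho'$ in the same $o_2$-fibre, the plan is to deform both so that $C$ lies in an open dense stratum where the square-root cover is locally trivial, then connect the $(A_1,\ldots,A_{k-1})$-parts via the induction hypothesis applied to an appropriate restricted representation variety, and finally extend using the matching branch of the square root. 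The base cases $k = 3,4$ must be handled directly via the alternate presentations $\pi_1(N_3) = \langle a,b,c \mid [a,b]c^2\rangle$ and $\pi_1(N_4) = \langle a,b,c,d \mid [a,b]c^2 d^2\rangle$, combining Goldman's theorem for the torus-with-boundary factor with explicit computations for the crosscap or Klein-bottle contribution.

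The principal obstacle will be managing the degenerate locus, where $C \in \{\pm I\}$, is parabolic, or sits on the boundary of the elliptic stratum. Over these strata the square-root cover $V_k \to V_{k-1}$ branches non-trivially, the two sheets coalesce, and a priori either a continuous path could jump between the two $o_2$-classes through the branching, or extra connected components could hide. The main technical task is therefore to show that every representation can be perturbed into the generic stratum while preserving $o_2$---most delicately in the reducible locus where representations factor through elementary subgroups of $\psl$---and this is also why the small-genus base cases $k = 3,4$ cannot be folded into the induction.
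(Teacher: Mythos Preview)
Your overall strategy---remove a crosscap and analyze the square-root cover---has the right shape, but the induction as you have set it up contains a structural gap. After cutting off one $\R P^2$, the tuple $(A_1,\ldots,A_{k-1})$ is an arbitrary point of $\psl^{k-1}$: the fundamental group of $N_{k-1}\setminus D^2$ is \emph{free}, so this tuple is not a point of $V_{k-1}$ and does not satisfy $A_1^2\cdots A_{k-1}^2=I$. The induction hypothesis for the closed surface $N_{k-1}$ therefore says nothing about connecting these free tuples. What you would actually need is the statement for $N_{k-1}$ \emph{with one boundary component} and with the boundary holonomy constrained, and that is a genuinely different theorem requiring its own proof. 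In the paper this is precisely Theorem~\ref{thm:open}, and Theorem~\ref{thm:closed} is obtained as the boundaryless special case rather than by a separate argument.

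The paper's proof is not an induction on $k$. For odd $k$ one removes a single M\"obius strip $N$ and the complement $S=M\setminus N$ is already \emph{orientable}, so Goldman's results on $W(S)$ apply directly; for even $k$ one removes a M\"obius strip and reduces to the odd case with boundary. The two technical pillars are: (i) every representation can be deformed so that the separating curve $K=\partial N$ has \emph{hyperbolic} holonomy (Lemmas~\ref{lem:Khyp}--\ref{lem:KAhyp}), which is the regime where the square root in $\psl$ is \emph{unique} (Proposition~\ref{prop:square}(ii)) and extension across the M\"obius strip is automatic (Lemma~\ref{lem:extension}); and (ii) the relative Euler class of the orientable restriction can be shifted by $2$ along a path whose boundary holonomy stays in the image $\J$ of the square map, so that the path lifts back to $W(M)$ (Propositions~\ref{prop:liftsquare} and \ref{lem:releveJ}--\ref{prop:eulermax}). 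After normalizing $e(\phi')\in\{-\chi(S),-\chi(S)-1\}$ one checks $e(\phi')\bmod 2=o_2(\phi)$, so two representations with equal $o_2$ have restrictions in the same component of $W(S)$ and can be joined.

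Your two-sheet picture over elliptic $C$ is correct, but it is not the regime the paper exploits: over hyperbolics there is a single sheet and the extension step is trivial, at the cost of having to control the Euler class of the orientable piece instead of the branch of a cover. Your base cases $k=3,4$, handled via the one-holed torus and Goldman, are essentially the paper's entire argument in miniature; the ``induction'' above them is the part that carries no information as written.
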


This paper is organized as follows.

The construction of the obstruction class $o_2$ for non-orientable surfaces is the object of Section \ref{section:obs}. We also define it for non-orientable surfaces with boundary and state some basic properties. To understand this map, we need to have a full description of the square map in $\SL (2 , \R)$ and its universal cover which can be found in Section \ref{section:square}. The Section \ref{section:path} is devoted to refinements of some of Goldman's intermediary results, to fit with the behavior of the square map. Finally, the Section \ref{section:proof} contains the core of the proof of the main theorem.

\medskip
{\bf Acknowledgments :} The author was partially supported by ANR Repsurf ANR-06-BLAN-0311. The author is grateful to his PhD thesis advisor, Louis Funar, for supervising this work and to Nan-Kuo Ho for useful comments and remarks.

\section{The square map}\label{section:square}

\subsection{Properties of the square map}
For the rest of the paper, let $G$ denote the group of orientation preserving isometries of the hyperbolic plane $\mathbb{H}^2$. We can identify $G$ with the group $\psl = \SL (2 , \R) / \{ \pm I \} $. 

The map
\begin{align*}
Q : \psl & \longrightarrow \SL (2 , \R)\\
    [ A ] & \longmapsto A^2
\end{align*}
is well-defined as $(-A)^2 = A^2$ in $\SL (2 , \R)$. This map satisfies the following:

\begin{proposition}\label{prop:square}
\
\begin{enumerate}[(i)]
\item The image of $Q$ is the set 
$$J = \{ K \in \SL (2 , \R) | \tr (K) > -2 \} \cup \{ -I \} .$$
\item For any $K \in J \setminus \{ -I \}$, there is a unique $A \in \psl$ such that $Q( A) = K$, given by 
 $$A = \frac{K+I}{\sqrt{\tr (K) +2} } .$$
\item The fiber $Q^{-1} ( -I )$ is the set $\displaystyle{\left\{ g \left( \begin{array}{cc} 0 & -1 \\ 1 & 0 \end{array} \right) g^{-1} | \, g \in \psl \right\} }$.
\end{enumerate}
\end{proposition}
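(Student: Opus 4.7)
The proof revolves around one algebraic fact: for any $A \in \SL(2,\R)$, the Cayley-Hamilton theorem gives $A^2 = \tr(A)\,A - I$. I would record this at the outset, together with its immediate consequence $\tr(A^2) = \tr(A)^2 - 2 \geq -2$, since this already confines the image of $Q$ to the set $\{\tr \geq -2\}$.

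For (i), I would handle the boundary case $\tr(K) = -2$ by hand: if $A^2 = K$, then $\tr(A)\,A = K + I$; either $\tr(A) = 0$, forcing $K = -I$, or else $A = (K+I)/\tr(A)$ and taking trace yields $\tr(A)^2 = \tr(K)+2 = 0$, a contradiction. Hence only $K = -I$ has $\tr(K) = -2$ in the image. For $\tr(K) > -2$, I would verify directly that the matrix $A := (K+I)/\sqrt{\tr(K)+2}$ lies in $\SL(2,\R)$ and squares to $K$: for the determinant, expanding $\det(K+I) = 1 + \tr(K) + \det(K) = \tr(K) + 2$ does it; for the square, Cayley-Hamilton on $K$ gives $K^2 = \tr(K)\,K - I$, so
\[
(K+I)^2 = K^2 + 2K + I = (\tr(K)+2)\,K,
\]
and dividing by $\tr(K)+2$ yields $A^2 = K$. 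This simultaneously proves (i) and the existence part of (ii).

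For the uniqueness in (ii), I would reuse the identity $K + I = \tr(A)\,A$: any $A$ with $A^2 = K$ and $K \neq -I$ must have $\tr(A) \neq 0$ (by the $\tr(K) = -2$ argument above, or by noting that $\tr(A) = 0$ forces $A^2 = -I$), so $A$ is determined by $K$ up to the sign of $\tr(A) = \pm\sqrt{\tr(K)+2}$. The two signs correspond to $\pm A \in \SL(2,\R)$, which give the same class in $\psl$.

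For (iii), the identity $\tr(A)\,A = A^2 + I = 0$ forces $\tr(A) = 0$, so $A$ represents an order-two element of $\psl$. Geometrically these are precisely the rotations of $\mathbb{H}^2$ by angle $\pi$, parametrized by their fixed points; since $\psl$ acts transitively on $\mathbb{H}^2$, all such elements are conjugate, and the standard representative is $\begin{pmatrix} 0 & -1 \\ 1 & 0 \end{pmatrix}$. I do not expect any real obstacle here—everything reduces to Cayley-Hamilton plus the transitivity of $\psl$ on $\mathbb{H}^2$; the only small point that deserves care is the boundary case $\tr(K) = -2$, where one must argue that nothing except $-I$ itself is a square.
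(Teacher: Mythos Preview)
Your proof is correct and, in fact, cleaner than the paper's. The paper proceeds by first putting $K$ into a normal form (rotation $R_\theta$, hyperbolic $H_t$, or parabolic $P_u$ according to the trace), solving $A^2 = K$ explicitly in each case, and only then invoking Cayley--Hamilton to recognise the unified formula $A = (K+I)/\sqrt{\tr(K)+2}$. You bypass that case analysis entirely: starting from $A^2 = \tr(A)\,A - I$ you simultaneously obtain the trace bound, the exclusion of $\{\tr = -2\}\setminus\{-I\}$, the explicit square root, and uniqueness up to sign. The determinant check $\det(K+I) = \tr(K)+2$ and the identity $(K+I)^2 = (\tr(K)+2)K$ are exactly the right computations and make the existence part self-contained. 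For part (iii) the paper writes down all trace-zero matrices in coordinates and exhibits an explicit conjugator, whereas you use the geometric fact that trace-zero elements are half-turns and that $\psl$ is transitive on $\mathbb{H}^2$; both are valid, and yours is the more conceptual. The only thing I would add for completeness is a one-line remark that $-I$ actually lies in the image (e.g.\ $Q\bigl(\bigl[\begin{smallmatrix}0&-1\\1&0\end{smallmatrix}\bigr]\bigr)=-I$), so that the description of $J$ in (i) is fully justified before you reach (iii).
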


\begin{proof}
Let $K$ be an element of $\SL (2 , \R)$, and $A \in \SL (2 , \R)$ such that $A^2 = K$. We have the identity $\tr (A^2) = (\tr (A) )^2 - 2$, hence the trace satisfies $\tr (A^2)  = \tr (K) \geq -2$. 
If $\tr (K) > -2$, then $K$ is conjugated to one of the following matrix:
\begin{itemize}
\item $\displaystyle{ R_{\theta} = \left( \begin{array}{cc} \cos \theta & \sin \theta \\ - \sin \theta & \cos \theta \end{array} \right) }$ with $\displaystyle{\theta \in \R \setminus \{ k\pi | k \in \Z \} }$, if \hbox{$\tr (K) <2$;}
\item $\displaystyle{ H_t = \left( \begin{array}{cc} \cosh t & \sinh t \\ \sinh t & \cosh t \end{array} \right)}$  with $t \in \R^*$, if $\tr (K) >2$;
\item $\displaystyle{ P_u  = \left( \begin{array}{cc} 1 & u \\ 0 & 1 \end{array} \right)}$ with $ u \in \R$, if $\tr (K) =2$.
\end{itemize}

In these three cases, the matricial equation $A^2 = K$ is easily solved. Namely we have: 
\begin{itemize}
\item $A^2 = R_\theta$ if and only if $A = \pm R_{\theta /2}$;
\item $A^2 =P_u$ if and only if $A = \pm P_{u/2}$;
\item $A^2= H_t$ if and only if $A = \pm H_{t/2}$.
\end{itemize}
In each case, one of the solutions satisfies $\tr (A) > 0$, and then we have $\tr (A) = \sqrt{\tr (K) +2}$. The Cayley-Hamilton theorem ensures that $A$ satisfies
 $$A = \frac{K+I}{\sqrt{\tr (K) +2} }.$$
The two solutions in $\SL (2 , \R)$ are identified in $\psl$, and hence $(ii)$ is proved.

If $\tr (K) = -2$, then $K$ is conjugated to the matrix $\displaystyle{\left( \begin{array}{cc} -1 & u \\ 0 & -1 \end{array} \right)}$. 

If $u \neq 0$, the equation $A^2 = K$ has no solutions in $\SL (2 , \R)$. 

If $u=0$, we have $K = -I$ and then the matrix $\displaystyle{A = \left( \begin{array}{cc} x & y \\ z & t \end{array} \right) \in \SL (2 , \R) }$ is a solution, if and only if the coefficients satisfy $z\neq 0$, $t = -x$ and $y = \frac{-1 - x^2}{z}$. Hence the set of solutions in $\SL (2 , \R) $ is
$$\left\{ \left( \begin{array}{cc} x & \frac{-1-x^2}{z} \\ z & -x \end{array} \right) | x\in \R , z \in \R^* \right\} \subset \SL (2 , \R). $$
Each solution in $\psl$ has a couple of representatives of the form $(A , -A)$ that are solutions in $\SL (2 , \R)$. We choose the representative satisfying $z > 0$ and define the matrix 
$$g = \left( \begin{array}{cc} \sqrt{z} & x/\sqrt{z} \\ & \\ 0 & 1/\sqrt{z} \end{array} \right) \in  \SL (2 , \R).$$
We see that
$$g \left( \begin{array}{cc} 0 & -1 \\ 1 & 0 \end{array} \right) g^{-1} = \left( \begin{array}{cc} x & \frac{-1-x^2}{z} \\ z & -x \end{array} \right) = A .$$
Conversely, any matrix $A \in \psl$ of the form $\displaystyle{A = g \left( \begin{array}{cc} 0 & -1 \\ 1 & 0 \end{array} \right) g^{-1}}$ satisfies \hbox{$Q (A) = - I$.}
\end{proof}

\subsection{The square map of the universal cover}

Let $\Gt$ be the universal cover of $G$ with $p : \Gt \rightarrow G$ the covering map. The group $G$ is \hbox{homeomorphic} to a solid torus, and hence $\pi_1 (G) = \Z$. The center of $\Gt$ is isomorphic to $\mathbb{Z}$ and $G = \Gt / Z (\Gt)$. We denote by $z$ a generator of $Z (\Gt)$, so that $Z (\Gt) = \langle z \rangle$.

The group $\SL (2 , \R)$ is a connected 2-fold cover of $\psl$, and thus $\Gt$ is also the universal cover of $\SL (2 , \R)$. We have a canonical identification 
$$ \SL (2 , \R) = \Gt / \langle z^2 \rangle . $$
For $A \in \Gt$ we define $\tr (A)$ to be the trace of the image of $A$ in $\SL (2 , \R)$. We say that $A \notin Z (\Gt)$ is {\it elliptic, parabolic} or {\it hyperbolic} according as $|\tr (A)| <2 , |\tr (A)| = 2$ or $|\tr (A)| >2$ and we denote by $\mathcal{E}$, $\mathcal{P}$ and $\mathcal{H}$ the subsets of $\Gt$ consisting of elliptic, parabolic and hyperbolic elements. These subsets decompose $\Gt$ into infinitely many components, indexed by $\Z$, according to Figure \ref{domaines}.

\begin{figure}[ht]
\begin{center}
\scalebox{0.9} 

\begin{pspicture}(0,-1.9812986)(13.62,1.9412986)
\psline[linewidth=0.04cm](1.02,1.9112986)(12.64,1.9112986)
\psline[linewidth=0.04cm](1.0,-1.3087014)(12.62,-1.3087014)
\psline[linewidth=0.04cm,linestyle=dashed,dash=0.16cm 0.16cm](1.02,1.9112986)(0.02,1.9112986)
\psline[linewidth=0.04cm,linestyle=dashed,dash=0.16cm 0.16cm](1.0,-1.3087014)(0.0,-1.3087014)
\psline[linewidth=0.04cm,linestyle=dashed,dash=0.16cm 0.16cm](13.6,1.9112986)(12.6,1.9112986)
\psline[linewidth=0.04cm,linestyle=dashed,dash=0.16cm 0.16cm](13.6,-1.3087014)(12.6,-1.3087014)
\psline[linewidth=0.06cm](2.6,1.9112986)(5.8,-1.3087014)
\psline[linewidth=0.06cm](5.8,1.9112986)(2.6,-1.3087014)
\psbezier[linewidth=0.03](5.8,1.9112986)(5.38,1.4912986)(5.38,-0.8687014)(5.82,-1.2887014)
\psbezier[linewidth=0.03,linestyle=dotted,dotsep=0.16cm](5.8,1.9112986)(6.2,1.5112987)(6.2,-0.8887014)(5.82,-1.2887014)
\psline[linewidth=0.06cm](5.8,1.8912987)(9.0,-1.3287014)
\psline[linewidth=0.06cm](9.0,1.8912987)(5.8,-1.3287014)
\psbezier[linewidth=0.03](9.0,1.8912987)(8.58,1.4712986)(8.58,-0.8887014)(9.02,-1.3087014)
\psbezier[linewidth=0.03,linestyle=dotted,dotsep=0.16cm](9.0,1.8912987)(9.4,1.4912986)(9.4,-0.90870136)(9.02,-1.3087014)
\psline[linewidth=0.06cm](9.0,1.9112986)(12.2,-1.3087014)
\psline[linewidth=0.06cm](12.2,1.9112986)(9.0,-1.3087014)
\psbezier[linewidth=0.03](12.2,1.9112986)(11.78,1.4912986)(11.78,-0.8687014)(12.22,-1.2887014)
\psbezier[linewidth=0.03,linestyle=dotted,dotsep=0.16cm](12.2,1.9112986)(12.6,1.5112987)(12.6,-0.8887014)(12.22,-1.2887014)
\psline[linewidth=0.06cm](1.04,0.2912986)(2.6,-1.3087014)
\psline[linewidth=0.06cm](2.6,1.9112986)(1.0,0.2912986)
\psbezier[linewidth=0.03](2.6,1.9112986)(2.18,1.4912986)(2.18,-0.8687014)(2.62,-1.2887014)
\psbezier[linewidth=0.03,linestyle=dotted,dotsep=0.16cm](2.6,1.9112986)(3.0,1.5112987)(3.0,-0.8887014)(2.62,-1.2887014)
\usefont{T1}{ptm}{m}{n}
\rput(9.209844,0.3412986){\psframebox[linewidth=0.028222222,linecolor=white,fillstyle=solid,framesep=0.0,boxsep=false]{$\mathcal{E}_0$}}
\usefont{T1}{ptm}{m}{n}
\rput(5.994219,0.30129862){\psframebox[linewidth=0.028222222,linecolor=white,fillstyle=solid,framesep=0.0,boxsep=false]{$\mathcal{E}_{-1}$}}
\usefont{T1}{ptm}{m}{n}
\rput(2.76875,0.3212986){\psframebox[linewidth=0.028222222,linecolor=white,fillstyle=solid,framesep=0.0,boxsep=false]{$\mathcal{E}_{-2}$}}
\usefont{T1}{ptm}{m}{n}
\rput(12.474218,0.36129862){\psframebox[linewidth=0.028222222,linecolor=white,fillstyle=solid,framesep=0.0,boxsep=false]{$\mathcal{E}_1$}}
\usefont{T1}{ptm}{m}{n}
\rput(7.2704687,1.3412986){\psframebox[linewidth=0.028222222,linecolor=white,fillstyle=solid,framesep=0.0,boxsep=false]{$\mathcal{H}_0$}}
\usefont{T1}{ptm}{m}{n}
\rput(10.574843,1.3412986){\psframebox[linewidth=0.028222222,linecolor=white,fillstyle=solid,framesep=0.0,boxsep=false]{$\mathcal{H}_1$}}
\usefont{T1}{ptm}{m}{n}
\rput(4.194844,1.3412986){\psframebox[linewidth=0.028222222,linecolor=white,fillstyle=solid,framesep=0.0,boxsep=false]{$\mathcal{H}_{-1}$}}
\psarc[linewidth=0.04](3.12,-1.1087013){0.68}{317.48956}{59.82648}
\rput{42.969494}(0.5609299,-2.5740309){\pstriangle[linewidth=0.04,dimen=outer](3.5502963,-0.73102844)(0.2088893,0.31314206)}
\psarc[linewidth=0.04](5.41,-1.1187013){0.75}{127.40536}{214.69516}
\rput{-44.193214}(1.7927115,3.2267516){\pstriangle[linewidth=0.04,dimen=outer](4.8702965,-0.7510284)(0.2088893,0.31314206)}
\usefont{T1}{ptm}{m}{n}
\rput(3.6698437,-1.7787014){\psframebox[linewidth=0.028222222,linecolor=white,fillstyle=solid,framesep=0.0,boxsep=false]{$\mathcal{P}_{-1}^-$}}
\usefont{T1}{ptm}{m}{n}
\rput(4.975,-1.7787014){\psframebox[linewidth=0.028222222,linecolor=white,fillstyle=solid,framesep=0.0,boxsep=false]{$\mathcal{P}_{-1}^+$}}
\psarc[linewidth=0.04](6.32,-1.1487014){0.68}{317.48956}{59.82648}
\rput{44.780567}(1.5260508,-4.9331746){\pstriangle[linewidth=0.04,dimen=outer](6.750296,-0.77102846)(0.2088893,0.31314206)}
\psarc[linewidth=0.04](8.61,-1.1587014){0.75}{127.40536}{214.69516}
\rput{-42.05164}(2.5027263,5.242139){\pstriangle[linewidth=0.04,dimen=outer](8.070296,-0.79102844)(0.2088893,0.31314206)}
\usefont{T1}{ptm}{m}{n}
\rput(6.7898436,-1.7987014){\psframebox[linewidth=0.028222222,linecolor=white,fillstyle=solid,framesep=0.0,boxsep=false]{$\mathcal{P}_{0}^-$}}
\usefont{T1}{ptm}{m}{n}
\rput(8.175,-1.7787014){\psframebox[linewidth=0.028222222,linecolor=white,fillstyle=solid,framesep=0.0,boxsep=false]{$\mathcal{P}_{0}^+$}}
\usefont{T1}{ptm}{m}{n}
\rput(10.029843,-1.7787014){\psframebox[linewidth=0.028222222,linecolor=white,fillstyle=solid,framesep=0.0,boxsep=false]{$\mathcal{P}_{1}^-$}}
\usefont{T1}{ptm}{m}{n}
\rput(11.355,-1.7987014){\psframebox[linewidth=0.028222222,linecolor=white,fillstyle=solid,framesep=0.0,boxsep=false]{$\mathcal{P}_{1}^+$}}
\psarc[linewidth=0.04](9.5,-1.1087013){0.68}{317.48956}{59.82648}
\rput{42.653996}(2.237737,-6.880432){\pstriangle[linewidth=0.04,dimen=outer](9.930296,-0.73102844)(0.2088893,0.31314206)}
\psarc[linewidth=0.04](11.79,-1.1187013){0.75}{127.40536}{214.69516}
\rput{-42.637463}(3.376627,7.4633265){\pstriangle[linewidth=0.04,dimen=outer](11.250297,-0.7510284)(0.2088893,0.31314206)}
\psdots[dotsize=0.15](4.2,0.3112986)
\psdots[dotsize=0.15](7.4,0.2912986)
\psdots[dotsize=0.15](10.6,0.3112986)
\usefont{T1}{ptm}{m}{n}
\rput(4.7834377,0.3212986){\psframebox[linewidth=0.028222222,linecolor=white,fillstyle=solid,framesep=0.0,boxsep=false]{$z^{-1}$}}
\usefont{T1}{ptm}{m}{n}
\rput(7.9334375,0.3212986){\psframebox[linewidth=0.028222222,linecolor=white,fillstyle=solid,framesep=0.0,boxsep=false]{$1$}}
\usefont{T1}{ptm}{m}{n}
\rput(11.093437,0.3212986){\psframebox[linewidth=0.028222222,linecolor=white,fillstyle=solid,framesep=0.0,boxsep=false]{$z^1$}}
\end{pspicture} 

\end{center}
\caption{Domains of $\Gt$}\label{domaines}
\end{figure}

We can distinguish these regions by the following invariants of $A \in \Gt$. An element $A \in \psl$ acts by projective automorphisms on the boundary $\partial \mathbb{H}^2 \simeq \mathbb{S}^1$. This action lifts to an action of $\Gt$ on $\widetilde{\mathbb{S}^1} = \R$. For $A \in \Gt$ we define
\begin{align*}
\underline{m} A &= \min \{ A \cdot x - x \, | \, x \in \R \} ,\\
\overline{m} A & = \max \{ A \cdot x - x \, | \, x \in \R \} .
\end{align*}

\begin{lemma}[\cite{jankins}] We have
\begin{align*}
A \in \mathcal{E}_i & \mbox{ if and only if } [\underline{m} A, \overline{m} A ] \subset ]i, i+ 1[, \\
A \in \mathcal{H}_i & \mbox{ if and only if } i \in \, ] \underline{m} A, \overline{m} A[, \\
A \in \mathcal{P}_i^+ & \mbox{ if and only if } \underline{m} A = i < \overline{m} A, \\
A \in \mathcal{P}_i^- & \mbox{ if and only if } \underline{m} A < i = \overline{m} A.  
\end{align*}
\end{lemma}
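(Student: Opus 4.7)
The plan is to read off the component of $A\in\widetilde{G}$ from its continuous displacement function $f_A(x)=A\cdot x-x$ on $\R$. Since $z$ is central and acts on $\R$ as the translation $x\mapsto x+1$ (it generates the deck group of $\R\to\mathbb{S}^1$), the element $A$ commutes with this translation, so $f_A$ has period $1$; thus $\underline{m}A$ and $\overline{m}A$ are finite and attained on $[0,1]$. Moreover $f_{z^k A}=f_A+k$, so multiplication by $z^k$ translates both $\underline{m}A$ and $\overline{m}A$ by $k$. Combined with $z^k\cdot\mathcal{E}_i=\mathcal{E}_{i+k}$ and the analogous equivariance for $\mathcal{H}_i$ and $\mathcal{P}_i^\pm$, this reduces the lemma to verifying the characterization on one representative of each conjugacy class in $\psl$. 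The \emph{elliptic} case is immediate: if $p(A)$ is elliptic it has no fixed point on $\partial\mathbb{H}^2$, so any integer value $f_A(x)=n$ would descend to a fixed point of $p(A)$ on $\mathbb{S}^1$; hence $f_A(\R)\cap\Z=\emptyset$, and being continuous and $1$-periodic, $f_A(\R)$ lies in some $(i,i+1)$, giving $[\underline{m}A,\overline{m}A]\subset(i,i+1)$.

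For the \emph{hyperbolic} case, suppose $p(A)$ has attracting and repelling fixed points $p_\pm\in\mathbb{S}^1$. Any lift $\widetilde{p}_\pm\in\R$ satisfies $A\cdot\widetilde{p}_\pm=\widetilde{p}_\pm+k_\pm$ for integers $k_\pm$ independent of the lift. The translation rotation number $\rho(A)=\lim_{n\to\infty}(A^n\cdot x-x)/n$, evaluated at either $x=\widetilde{p}_+$ or $x=\widetilde{p}_-$, gives $\rho(A)=k_+$ and $\rho(A)=k_-$ respectively; hence $k_+=k_-=:k$ and $f_A(\widetilde{p}_\pm)=k$. To upgrade this to the strict inclusion $k\in(\underline{m}A,\overline{m}A)$, I would pass to $B=z^{-k}A$, which fixes $\widetilde{p}_\pm$ and attracts toward $\widetilde{p}_+$: on the arc $(\widetilde{p}_-,\widetilde{p}_+)$ the dynamics force $f_B>0$, and on the complementary arc $f_B<0$. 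Uniqueness of $k$ among integers in $(\underline{m}A,\overline{m}A)$ follows from the IVT: any other integer value of $f_A$ would produce a further fixed point of $p(A)$ on $\mathbb{S}^1$, contradicting hyperbolicity.

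For the \emph{parabolic} case, let $p(A)$ have unique fixed point $p\in\mathbb{S}^1$ with lift $\widetilde{p}$, and set $k=f_A(\widetilde{p})\in\Z$. Since $p(A)$ acts freely on $\mathbb{S}^1\setminus\{p\}$ by translating every point in a single direction, the function $f_A-k$ has constant strict sign on each interval $(\widetilde{p}+m,\widetilde{p}+m+1)$, identical across $m$ by $1$-periodicity. A positive sign gives $\underline{m}A=k<\overline{m}A$, matching $\mathcal{P}_k^+$; a negative sign gives $\underline{m}A<k=\overline{m}A$, matching $\mathcal{P}_k^-$. This exhausts the two topological types of parabolics in $\widetilde{G}$ over a given conjugacy class in $\psl$.

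The main obstacle is the hyperbolic case: showing cleanly that the two integers $k_\pm$ coincide, and that $k$ lies strictly inside $[\underline{m}A,\overline{m}A]$. The rotation-number interpretation sidesteps any combinatorial analysis of how $A$ permutes the lifts of $p_\pm$, while the attracting/repelling dynamics of $z^{-k}A$ on each fundamental arc yields the strict inclusion. Matching the integer so obtained to the paper's labeling of the components $\mathcal{E}_i,\mathcal{H}_i,\mathcal{P}_i^\pm$ is then a direct consequence of the $z$-equivariance from the first paragraph.
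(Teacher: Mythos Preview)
The paper does not prove this lemma; it is quoted verbatim from Jankins--Neumann \cite{jankins} and used as a black box. So there is no ``paper's own proof'' to compare against.

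Your argument is essentially the standard one and is correct in outline. The periodicity of $f_A$, the equivariance $f_{z^kA}=f_A+k$, and the trichotomy according to whether $f_A$ avoids, crosses, or touches an integer are exactly the right ingredients. One small point deserves a sentence more: you establish that for each type there is \emph{some} integer $i$ with the stated property, and then invoke $z$-equivariance to match it to the paper's indexing. But equivariance only propagates a correct base case; it does not by itself anchor the labeling. You should check one explicit element in each of $\mathcal{H}_0$, $\mathcal{E}_0$, $\mathcal{P}_0^\pm$ (for instance, a lift of $H_t$ with small $t>0$ has $f_A$ close to $0$ with both signs, confirming $0\in(\underline{m}A,\overline{m}A)$ for $\mathcal{H}_0$). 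Once that anchor is fixed, the rest follows as you say.
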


Let $\J \subset \Gt$ be the set of elements of $\Gt$ whose image in $\SL (2 , \R)$ is in $J$, or equivalently the image of the map
\begin{align*}
\widetilde{Q} : \Gt & \longrightarrow \Gt \\
A & \longmapsto A^2 .
\end{align*}
We have
$$\J = \left\{ A \in \Gt | \tr (A) > -2 \right\} \cup \left\{ z^{2k +1} | k\in \Z \right\} $$
For an element $K \in \Gt$, we have $\tr (K) < - 2$ if and only if $K \in  \mathcal{H}_{2k+1}$ for a certain $k \in \Z$. Likewise, $\tr (K) = -2$ if and only if \hbox{$K \in  \mathcal{P}_{2k+1} \cup \{ z^{2k+1} \}$.} We infer that 
$$\J = \Gt \setminus \left( \bigcup_{k\in \Z} \mathcal{H}_{2k+1} \cup \mathcal{P}_{2k+1} \right) .$$

As a consequence of Proposition \ref{prop:square}, we can state a path-lifting \hbox{property} of the square map:

\begin{proposition}\label{prop:liftsquare} 
Let $\{ K \}_{t \in [0,1]}$ be a path in $\Gt$ satisfying the following properties:
\begin{enumerate}
\item For all t in $[0 , 1]$, the element $K_t$ is in $\J$.
\item The set $\mathcal{T} = \{ t \in ]0 , 1[ \, | \, K_t \neq z^{2k+1}$ for any $k \in \mathbb{Z} \}$ is a finite union of open intervals.
\item For every $s \in [0,1]$ such that $K_s = z^{2k+1}$, there exists $\epsilon > 0$, $h_s$ and $g_s$ in $\Gt$ such that : 
$$\forall t \in ( s - \epsilon , s ), K_t = g_s \widetilde{R}_{\theta_t} g_s^{-1}$$ 
$$\forall t \in ( s, s + \epsilon ),  K_t = h_s \widetilde{R}_{\theta_t} h_s^{-1}$$
 where $\widetilde{R}_{\theta_t}$ is a lift of $R_{\theta_t}$ with $\theta_t$ converging towards $\pi$.
\end{enumerate}
Then, up to a reparametrization of the path $\{ K_t \}_{t\in [0,1]}$, there exists a continuous path $\{A_t \}_{t\in[0,1}$ such that for all $t \in [0,1]$, we have $\widetilde{Q} (A_t) = K_t $.
\end{proposition}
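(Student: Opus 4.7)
The strategy is to lift $\{K_t\}$ canonically on the open set $\mathcal{T}$ (where no singular value $z^{2k+1}$ is attained), and to bridge continuously across the finitely many singular times by moving inside the path-connected fibers $\widetilde{Q}^{-1}(z^{2k+1})$. The promised reparametrization is precisely what creates short intervals on which $K_t$ rests at those singular values so that $A_t$ has room to travel inside the fiber.

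For any $K \in \J$ with $K \neq z^{2k+1}$, equivalently $[K] \neq -I$ in $\psl$, Proposition \ref{prop:square}(ii) produces a unique $[A] \in \psl$ with $[A]^2 = [K]$. Picking any lift $A_0 \in \Gt$ of $[A]$ gives $A_0^2 = K z^{2m}$ for some $m \in \Z$ (an even power, since $K$ lies in the image of $\widetilde{Q}$), and then $A := A_0 z^{-m}$ is the \emph{unique} square root of $K$ in $\Gt$, uniqueness coming from the fact that $Z(\Gt) = \langle z \rangle$ is infinite cyclic. The resulting assignment $K \mapsto A$ is a continuous section of $\widetilde{Q}$ on $\J \setminus \{z^{2k+1}\}_{k \in \Z}$, so on each connected component of $\mathcal{T}$ it yields a continuous lift $\{A_t\}$ of $\{K_t\}$.

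At a singular time $s$ with $K_s = z^{2k+1}$, hypothesis (3) gives $K_t = g_s \widetilde{R}_{\theta_t} g_s^{-1}$ for $t \in (s-\epsilon, s)$ with $\theta_t \to \pi$. The explicit candidate $B_t := g_s \widetilde{R}_{\theta_t/2} g_s^{-1}$ is continuous, squares to $K_t$, and by uniqueness coincides with $A_t$ on $(s-\epsilon, s)$; therefore $A_t$ admits a left limit $A_s^- = g_s \widetilde{R}_{\pi/2 + k\pi} g_s^{-1}$ in $\widetilde{Q}^{-1}(z^{2k+1})$. The same argument on the right, with conjugator $h_s$, yields a right limit $A_s^+$ in the same fiber. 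By lifting Proposition \ref{prop:square}(iii) to $\Gt$, this fiber is the $\Gt$-conjugacy class of $\widetilde{R}_{\pi/2 + k\pi}$, hence path-connected. One then chooses a continuous path inside the fiber from $A_s^-$ to $A_s^+$, reparametrizes $\{K_t\}$ so that it sits at $z^{2k+1}$ throughout a short interval replacing the instant $s$, and sets $A_t$ to traverse the chosen fiber path during this interval. Applying this at each of the finitely many singular times produces the desired continuous lift.

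The delicate point I expect is the parity check: to invoke the correct conjugacy class I must verify that the chosen lift $\widetilde{R}_{\theta_t}$ coming from hypothesis (3) really forces $\theta_t/2$ to tend to $\pi/2 + k\pi$ with the \emph{same} $k$ as in $K_s = z^{2k+1}$, both from the left and from the right. This amounts to tracking the integer-valued invariants $\underline{m}$ and $\overline{m}$ of the preceding lemma as $K_t$ approaches the boundary of an elliptic component of $\Gt$, which is straightforward but must be done carefully to ensure that $A_s^-$ and $A_s^+$ land in \emph{the same} fiber (and hence can be joined inside it).
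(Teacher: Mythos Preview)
Your proposal is correct and follows essentially the same route as the paper's proof: canonical square-root on the regular set $\mathcal{T}$, one-sided limits at singular times via the explicit conjugated rotations from hypothesis (3), and bridging across each singular time inside the path-connected fiber $\widetilde{Q}^{-1}(z^{2k+1})$ after reparametrizing. Your closing caution about the parity of $k$ is harmless, since both one-sided limits $A_s^{\pm}$ square to $K_s = z^{2k+1}$ by construction and hence automatically lie in the same fiber.
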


\begin{proof}
Let $\{ K_t \}_{t\in [0,1]}$ be a path satisfying the hypotheses. The set $\mathcal{T}$ is a finite union of intervals denoted by $T_i = ] s_i^- , s_i^+[ $, with $i \in \{0, m \}$. On each interval $T_i$, the path in $\Gt$ defined by
\begin{equation}\label{AT}
A_t= \dfrac{K_t + I}{\sqrt{\tr (K_t) +2} }, \, \forall t \in T_i
\end{equation}
is continuous. 

Let $i\in \{ 0 , 1 , \dots  , m \}$ such that $s_i^- \neq 0$, and $k \in \Z$ such that $K_s = z^{2k+1}$. On a right neighborhood of the point $s_i^- $,  we can write $K_t = g_i R_{\theta_t} g_i^{-1}$ with $t\in \mathcal{T}$. As $\displaystyle{\lim_{t\rightarrow s_i^- } K_t = z^{2k+1} }$, we infer that $\displaystyle{ \lim_{t\rightarrow s_i^- } \theta_t = \pi + 2k \pi }$. On this neighborhood, the matrix $A_t$ is defined (\ref{AT}) which gives us:
$$A_t = g_i \widetilde{R}_{\theta_t /2} g_i^{-1}.$$ 
Hence, the path $\{ A_t \}_{t\in T_i}$ has a limit when $t$ converges towards $s_i^-$ with $t > s_i^-$. This limit is the matrix in $\Gt$ defined by
$$A_{i}^- = \lim_{t\stackrel{>}{\rightarrow} s_i^-} A_t =  g_i \widetilde{R}_{\pi/2} g_i^{-1} .$$

Using the same argument in a left neighborhood of the point $s_i^+$, we define the matrix $A_i^+$ in $\Gt$ by:
$$A_{i}^- = \lim_{t\stackrel{>}{\rightarrow} s_i^+} A_t =  h_i \widetilde{R}_{\pi/2} h_i^{-1} .$$

Hence, on each closed interval $\bar{T_i}$, we have a path $\{ A_t \}_{t\in \bar{T_i}}$ satisfying the desired properties. It suffices to find paths on the intervals of $]0,1[ \setminus \mathcal{T}$ such that the endpoints of these paths coincide.

Up to a possible reparametrization of the path $\{ K_t \}_{t\in [0,1]}$, we can assume that $s_i^+ \neq s_{i+1}^-$. For all $t \in [s_i^+ , s_{i+1}^-] = T_i '$, we have $K_t = z^{2k+1}$. The group $G$ being connected, there exists a path joining $h_i$ to $g_{i+1}$. Hence, there exists a path $\{ A_t \}_{t\in T_i '}$ joining $A_i^+$ to $A_{i+1}^-$ in $Q^{-1} (I)$. 

The paths $\{ A_t \}_{ t \in T_i }$, $\{ A_t \}_{t \in T_i '}$ and $\{ A_t \}_{t \in T_{i+1} }$ are compatible for gluing, in the sense that the limits at the common boundary points are equals. Hence the path  $\{ A_t \}_{t \in [0,1]}$ is continuous and satisfies the desired properties.
\end{proof}

\begin{remark}
Without the condition $(3)$, the Proposition \ref{prop:liftsquare} is false. For $t \neq 0$, consider the matrix in $\SL (2 , \R)$ defined by
$$g_t = \left( \begin{array}{cc} \sqrt{2} + \sin \left(\frac{1}{t}\right) & \cos \left(\frac{1}{t}\right) \\ \left(\frac{1}{t}\right) & \sqrt{2} - \sin \left(\frac{1}{t}\right) \end{array} \right)$$
and let $\theta_t = \pi - t$. Consider the path in $\SL (2 , \R)$ given by
$$K_t = \left\{ \begin{array}{ll} g_t R_{\theta_t} g_t^{-1} & \mbox{ if }t \neq 0\\ -I &\mbox{ if } t = 0. \end{array} \right. $$
This path is continuous and lies within $\mathcal{J}$. For all $t \neq 0$, there is a unique $A_t \in \psl$ such that $Q (A_t) = K_t$ given by 
$$A_t = g_t R_{(\theta_t / 2)} g_t^{-1}.$$
However, $A_t$ does not converge when $t$ goes to $0$. Hence the path $\{ K_t \}_{t\in [0,1]}$ can not be lifted in $\psl$.
\end{remark}


\section{The obstruction map}\label{section:obs}
In this section, we define and compute the map $o_2$ in the context of non-orientable surfaces, and we give some of its properties.

\subsection{Definitions}
Let $M$ be a closed non-orientable surface of genus $k$, with $k\geq 1$. Its fundamental group admits the following presentation:
$$\pi_1 (M) = \pi = \langle A_1 , \dots , A_k | A_1^2 \cdots A_k^2 \rangle .$$
The obstruction map
$$o_2  : \Hom (\pi , G) \rightarrow  \Z / 2\Z ,$$
is defined as follows.

Let $\phi$ be an element of $\Hom (\pi, G)$. Choose lifts $\widetilde{\phi (A_1)} , \dots , \widetilde{\phi(A_k)} $ of the images of the generators into $\widetilde{G}$. The relation of the fundamental group implies that the element 
$$(\widetilde{\phi (A_1)})^2 \cdots (\widetilde{\phi (A_k)})^2$$
 is a lift of the identity element of $G$ into $\Gt$. Hence, it is an element of $\pi_1 (G) \cong \Z $. This element is not independent of the chosen lift (contrary to the analogous situation in orientable surface). However the element is well-defined up to an element of $2 \pi_1 (G) \cong 2 \Z$. This gives a well-defined element of $ \Z / 2\Z$, that we denote by $o_2 (\phi)$ and we call the Stiefel-Whitney class of $\phi$.

The map $o_2$ is continuous and hence allows us to partition the space $\Hom (\pi , G)$ into sets that are both open and closed. We can now recall the Theorem \ref{thm:closed} as:
\begin{thm}
Let $M$ be a closed non-orientable surface of genus $k$ with $k \geq 3$. The set $\Hom (\pi_1 (M) , \psl )$ has two connected components, which are given by the preimages $o_2^{-1} (n)$, where $n$ is an element of $\mathbb{Z} / 2 \mathbb{Z}$.
\end{thm}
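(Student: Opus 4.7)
The plan is to prove the theorem by showing that $o_2$ is surjective and that each of its fibers is path-connected. For surjectivity, I would exhibit an explicit representation of each class: the trivial representation lies in $o_2^{-1}(0)$, while a representation in $o_2^{-1}(1)$ is obtained by setting $X_1=X_2$ to be the class in $\psl$ of a rotation by $\pi/2$ (which has order two in $\psl$) and $X_3=\cdots=X_k=I$. Then $X_1^2\cdots X_k^2=I$ holds in $\psl$, while the product of the corresponding lifts in $\Gt$ equals $\widetilde{R}_{2\pi}=z^{\pm 1}$, so that $o_2=1$. This uses $k\geq 3$ to absorb the trailing identities.

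The core task is to prove that each fiber $o_2^{-1}(n)$ is path-connected. I would parametrize $\Hom(\pi_1(M),\psl)$ by tuples $(X_1,\dots,X_k)\in\psl^k$ satisfying $X_1^2\cdots X_k^2=I$, and set $K_i=X_i^2\in J$; the obstruction class is the parity of the integer $n$ for which chosen lifts satisfy $\widetilde{K}_1\cdots\widetilde{K}_k=z^n$ in $\Gt$. The strategy, adapted from Goldman's treatment of the orientable case, is an induction on the non-orientable genus $k$: split $M$ along a two-sided simple closed curve $\gamma$ into a Klein-bottle-with-boundary (carrying two consecutive squared generators) and a non-orientable surface of smaller genus with boundary (carrying the rest). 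For each piece, connectedness of the representation variety with fixed boundary holonomy is obtained from the refinements of Goldman's path-deformation lemmas proven in Section~\ref{section:path}, and the two pieces are glued using Proposition~\ref{prop:liftsquare}, which promotes a path of squares in $\J\subset\Gt$ to a continuous path of square-roots in $\psl$. The induction base is a direct analysis of small-genus pieces with boundary, essentially reducing to the lifting proposition and a computation on a single Klein bottle.

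The main obstacle will be the careful control of the deformation whenever the path of intermediate products in $\Gt$ crosses a central element $z^{2j+1}$, equivalently $-I\in\SL(2,\R)$. The remark at the end of Section~\ref{section:square} shows that without the auxiliary condition~(3) of Proposition~\ref{prop:liftsquare}, path-lifting genuinely fails; arranging condition~(3) at every crossing of the singular fiber of $\widetilde{Q}$ is therefore the technical heart of the argument, and is exactly what the intermediary results of Section~\ref{section:path} are designed to guarantee. This is precisely where the hypothesis $k\geq 3$ enters: with three or more squared generators there is always enough slack either to perturb the tuple transverse to the singular locus, or to absorb a compensating deformation into a remaining generator while preserving the relation and the obstruction class. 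For $k\in\{1,2\}$ this flexibility is unavailable and additional components appear, in analogy with the Ho--Liu exceptions in the compact setting.
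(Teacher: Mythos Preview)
Your surjectivity example is miscomputed: with $X_1=X_2=[R_{\pi/2}]$ one gets $\widetilde{X_1}^2\widetilde{X_2}^2=\widetilde{R}_{2\pi}$, but in $\Gt$ (the universal cover of $\psl$, where the rotation circle has period $\pi$) this is $z^{\pm 2}$, not $z^{\pm 1}$; so $o_2=0$. The easy fix is to take a single $X_1=[R_{\pi/2}]$ and all other $X_i=I$, which works for every $k\ge 1$. In particular the hypothesis $k\ge 3$ has nothing to do with surjectivity; it is equivalent to $\chi(M)\le -1$ and is needed only for connectedness of the fibres.

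Your main strategy differs from the paper's and, as written, has a genuine gap. You propose to split off a one-holed Klein bottle and induct on $k$, keeping both pieces non-orientable. The paper instead removes a single M\"obius strip $N$ with core $A$ and boundary $K=A^2$. For odd $k$ the complement $S=M\setminus N$ is \emph{orientable}, so one can invoke Goldman's theorem that the components of $W(S)$ are exactly the level sets of the relative Euler class $e$; after arranging $\phi(K)\in\mathcal H$ (Lemma~\ref{lem:KAhyp}) and adjusting $e(\phi|_{\pi_1(S)})$ into $\{-\chi(S),-\chi(S)-1\}$ via Proposition~\ref{prop:eulermax} (this is where Proposition~\ref{prop:liftsquare} is used to carry the square root $A$ along), one checks $o_2(\phi)\equiv e(\phi|_{\pi_1(S)})\pmod 2$, so two representations with equal $o_2$ have restrictions in the same component of $W(S)$, and Lemma~\ref{lem:extension} extends a connecting path back to $W(M)$. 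Even $k$ is reduced to odd $k$ by removing one more M\"obius strip. The point is that all the path-lifting machinery of Section~\ref{section:path} (Propositions~\ref{lem:releve}--\ref{prop:eulermax}) is stated and proved for \emph{orientable} $\Sigma$; your inductive scheme would require analogous statements for the non-orientable pieces, which you have not supplied and which are essentially the theorem you are trying to prove. Moreover your base case is left vague: for $k=3$ the ``smaller'' piece after removing a one-holed Klein bottle is a M\"obius strip with $\chi=0$, outside the range of Theorem~\ref{thm:open}.

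In short, the decisive idea you are missing is the reduction to the orientable situation via a single M\"obius strip, which converts the problem into a parity statement about Goldman's Euler class and makes Section~\ref{section:path} directly applicable.
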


To prove this theorem, we have to decompose the surface $M$ into surfaces that are necessarily surfaces with boundary.

\subsection{Surfaces with boundary}

In this section, we state a generalization of Theorem \ref{thm:closed} to surfaces with boundary. 

Let $M$ be a non-orientable surface of genus $k$ with $m$ boundary components. Its fundamental group $\pi$ admits the following presentation: 
$$\pi =  \langle A_1 , \dots , A_k , C_1 , \dots , C_m | A_1^2 \cdots A_k^2 \cdot C_1 \cdots C_m\rangle$$
where $C_1 , \dots , C_m$ correspond to the components of $\partial M$.  The group $\pi$ is isomorphic to the free group in $k+m-1$ generators, and the representation space $\Hom (\pi_1 (M) , G)$ is identified with $G^{k+m-1}$. As the group $G = \psl$ is connected, the representation space is also connected. We shall need to impose boundary conditions in order to define a suitable obstruction map.

Suppose that an element $\phi$ of $\Hom (\pi , G)$ is a homomorphism such that for each boundary component $C_i$, the image $\phi (C_i)$ is hyperbolic. Let $W(M)$ denote the set of all homomorphisms satisfying this condition. For any element $g \in \mathcal{H}$ there exists a unique lift $\widetilde{g}^0$ in $\mathcal{H}_0$. Let $\widetilde{\phi (C_i)}^0$ be the lift of $\phi (C_i)$ into $\mathcal{H}_0$ for any $i \in \{ 1 , \dots , m \}$. Choose lifts $\widetilde{\phi (A_1)} , \dots , \widetilde{\phi(A_k)} $ of the images of the generators into $\Gt$. The relation of the fundamental group implies that the element 
$$(\widetilde{\phi (A_1)})^2 \cdots (\widetilde{\phi (A_k)})^2 \widetilde{\phi (C_1)}^0 \dots \widetilde{\phi (C_m)}^0$$
 is a lift of the identity element of $G$ into $\Gt$. As in the closed case, this gives a well-defined element of $ \Z / 2\Z$, that we call the {\it relative Stiefel-Whitney class} of $\phi$, and also denote $o_2 (\phi)$. This gives the map
 $$o_2 : W(M) \longrightarrow \Z/ 2 \Z .$$
The generalization of Theorem \ref{thm:closed} is the following:

\begin{thm}\label{thm:open}
Let $M$ be a compact non-orientable surface with \hbox{$\chi (M)\leq -1$.} The set $W(M)$ has two connected components, which are given by the preimages $o_2^{-1} (n)$ where $n$ is an element of $\mathbb{Z} / 2 \mathbb{Z}$.
\end{thm}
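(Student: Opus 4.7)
The plan is to establish Theorem \ref{thm:open} by induction on the complexity $c(M) = -\chi(M) = k+m-2$. Two things must be shown: that the map $o_2$ is surjective, and that each fiber $o_2^{-1}(n)$ is path-connected. For surjectivity, one exhibits in $W(M)$ two explicit representations with distinct values of $o_2$ --- for instance, one in which a generator $\phi(A_1)$ is taken to be an order-two element of $\psl$, whose lift to $\Gt$ squares to $z$ rather than to $1$, so that the product defining $o_2$ differs by an odd power of $z$ from the analogous product for a representation with $\phi(A_1)$ trivial, while the remaining generators are arranged to preserve the hyperbolic boundary condition.

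For the connectedness of each fiber, the inductive step cuts $M$ along a suitable simple closed curve $\gamma$ whose complement has strictly smaller complexity. When $k \geq 3$ one may take $\gamma$ to be a two-sided non-separating curve whose complement is a non-orientable surface with two additional boundary components; for smaller $k$ one cuts along a one-sided curve whose regular neighborhood is a Möbius band, reducing $k$ by one at the cost of a new boundary component. Given $\phi_0, \phi_1$ in the same fiber of $o_2$, the first task is to deform each within $W(M)$ so that $\phi(\gamma)$ becomes hyperbolic of a prescribed trace. This uses the refinements of Goldman's genericity arguments from Section \ref{section:path}, together with the path-lifting property of Proposition \ref{prop:liftsquare}, which keeps $o_2$ under control whenever the deformation passes near parabolic or central elements. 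Once $\phi(\gamma)$ lies in a fixed hyperbolic conjugacy class, $\phi$ decomposes into representations of the pieces of $M\setminus\gamma$, and the gluing formula for $o_2$ --- based on the canonical lift $\widetilde{g}^0 \in \mathcal{H}_0$ assigned to hyperbolic boundary elements --- expresses $o_2(\phi)$ as a sum of the relative Stiefel-Whitney (or Euler) invariants of the pieces. The inductive hypothesis then supplies paths inside each prescribed fiber on the pieces, which glue along $\gamma$ to a path in $o_2^{-1}(n)$.

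The base cases, with $c(M)=1$, are the Möbius band with two holes ($k=1, m=2$) and the Klein bottle with one hole ($k=2, m=1$). These must be treated directly by parameterizing the set of tuples in $\psl^{k+m-1}$ whose formal product, evaluated as the image under $\phi$ of the fundamental relator, equals a prescribed hyperbolic element. Proposition \ref{prop:square} furnishes such a parameterization in terms of square roots, and one verifies by hand that fixing the Stiefel-Whitney value isolates a single connected component.

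The main obstacle is expected to be controlling $o_2$ through the cutting argument when a path of representations approaches the loci where $\phi(\gamma)$, or an intermediate product of the form $\phi(A_i)^2\phi(A_{i+1})^2$, leaves the image $\mathcal{J}$ of the square map or crosses the central fiber $\widetilde{Q}^{-1}(z^{2\ell+1})$. It is precisely there that hypothesis (3) of Proposition \ref{prop:liftsquare} must be verified along the deformations produced by the refinements of Section \ref{section:path}. Threading this through while simultaneously keeping every $\phi(C_i)$ hyperbolic, and while ensuring that the gluing formula tracks $o_2$ correctly across the degenerations, is the most delicate part of the argument.
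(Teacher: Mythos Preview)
Your inductive scheme has a structural gap: the quantity $c(M) = -\chi(M)$ does \emph{not} decrease under either of the cutting operations you propose. Removing an open regular neighborhood of a simple closed curve $\gamma$ --- whether $\gamma$ is two-sided non-separating or one-sided --- leaves $\chi$ unchanged, since $\chi(\text{annulus}) = \chi(\text{M\"obius band}) = 0$. In your own notation, cutting along a one-sided curve takes $(k,m)$ to $(k-1,m+1)$, so $c = k+m-2$ is preserved; similarly for the two-sided non-separating cut. Hence the ``inductive step'' never reaches a surface of strictly smaller complexity, and the argument does not terminate at your base cases. (You also omit the closed case $k=3$, $m=0$ from the list of surfaces with $c(M)=1$.)

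The paper does not proceed by induction on non-orientable surfaces at all. Instead it reduces in one step to Goldman's theorem for \emph{orientable} surfaces. For $M$ of odd non-orientable genus, a single embedded M\"obius strip $N$ can be chosen so that $S = M\setminus N$ is orientable with $\chi(S)=\chi(M)$; after using Lemma~\ref{lem:KAhyp} to make $\phi(K)$ hyperbolic, the restriction $\phi'$ lies in $W(S)$, and Proposition~\ref{prop:eulermax} (together with Proposition~\ref{prop:liftsquare}) is used to move $e(\phi')$ into $\{-\chi(S),\,-\chi(S)-1\}$ while keeping a square root of $\phi'(K)$ available. A short computation then shows $e(\phi')\bmod 2 = o_2(\phi)$, so two representations with equal $o_2$ have restrictions in the same component of $W(S)$; Goldman's connectedness of $e^{-1}(n)$ and Lemma~\ref{lem:extension} finish the argument. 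The even-genus case removes one M\"obius strip to land in the odd-genus case just established. Your intuition that the square map and Proposition~\ref{prop:liftsquare} are the key technical inputs is correct, but they are used to pass \emph{once} from $W(M)$ to $W(S)$ with $S$ orientable, not to drive an induction internal to the non-orientable world.
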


The rest of the paper will be devoted to the proof of this theorem.
\\

For an orientable surface with boundary $S$, we denote by 
$$e : W(S) \rightarrow \Z$$
the relative Euler class as defined by Goldman in \cite{goldman}. The relative Stiefel-Whitney class enjoys a simple additivity formula, similar to the orientable case.

\begin{proposition} Let $M$ be a non-orientable surface such that $M = M_1 \cup M_2$ and $\phi \in \Hom(\pi , G)$ such that for each boundary component $C$ of $M_i$, the restriction of $\phi$ to $\pi_1 (C)$ is hyperbolic. Without loss of generality we may assume that $M_1$ is non-orientable.
Then,
\begin{itemize}
\item if $M_2$ is non-orientable
$$o_2 (\phi) = o_2 (\phi \mid_{\pi_1 (M_1)}) + o_2 (\phi \mid_{\pi_1 (M_2)}) \in \Z / 2 \Z.$$
\item If $M_2$ is orientable
$$o_2 (\phi) = o_2 (\phi \mid_{\pi_1 (M_1)}) + \bar{e} (\phi \mid_{\pi_1 (M_2)})   \in \Z / 2 \Z.$$
\end{itemize}
where $\bar{e}(\phi)  = e (\phi) (\mbox{ mod } 2) \in \Z / 2 \Z$
\end{proposition}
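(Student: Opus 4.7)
The plan is to compute $o_2(\phi)$ via the Seifert--van Kampen presentation of $\pi_1(M)$ arising from the decomposition $M = M_1 \cup M_2$, rather than from the canonical presentation used to define $o_2$. Let $D_1, \ldots, D_l$ be the shared boundary curves; including them as boundary generators of both $\pi_1(M_1)$ and $\pi_1(M_2)$ (with opposite orientations), the push-out presents $\pi_1(M)$ with two defining relators $R_1, R_2$, one from each piece, in which each $D_l$ appears in $R_1$ and its inverse in $R_2$. I would choose arbitrary lifts $\widetilde{\phi(A_i)}, \widetilde{\phi(B_j)} \in \Gt$ for the genus generators and the unique hyperbolic lifts in $\mathcal{H}_0$ for every boundary element. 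A crucial point is that $\mathcal{H}_0$ is preserved by inversion, so the hyperbolic lift of $\phi(D_l)^{-1}$ appearing in $R_2$ equals the inverse of the hyperbolic lift of $\phi(D_l)$ appearing in $R_1$.

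Let $r_1, r_2 \in Z(\Gt) \cong \pi_1(G)$ denote the lifts of $R_1, R_2$ built from these choices, and $r \in Z(\Gt)$ the lift of the canonical relator for $M$ built from the same $\widetilde{\phi(A_i)}, \widetilde{\phi(B_j)}$ and the hyperbolic lifts of the external boundaries. By definition $r_i$ represents $o_2(\phi|_{M_i})$ modulo $2\pi_1(G)$ (or the Euler class $e(\phi|_{M_2})$ when $M_2$ is orientable), and $r$ represents $o_2(\phi)$ modulo $2\pi_1(G)$. The proposition then reduces to the identity $r \equiv r_1 r_2 \pmod{2\pi_1(G)}$. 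Using the centrality of $r_1, r_2$ to shuffle factors, $r^{-1} r_1 r_2$ can be rewritten as a central commutator-type expression involving the hyperbolic lifts $\tilde D_1, \ldots, \tilde D_l$ on one side and, on the other, a word $w$ in the genus lifts of $M_2$ together with the external boundary lifts $\tilde C_i, \tilde C'_j$.

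The main obstacle is to show that this central element of $\pi_1(G) \cong \Z$ is even. The key algebraic input is the squaring identity $[\tilde a, \tilde b^2] = [\tilde a, \tilde b]^2$, valid whenever $[\tilde a, \tilde b]$ is central (insert $\tilde b^{-1}\tilde b$ and use centrality), together with the expansion $[x, yz] = [x,y] \cdot y[x,z]y^{-1}$. Applying these to the product of squares $\widetilde{\phi(B_j)}^2$ inside $w$ reduces the problem to commutators involving only the hyperbolic lifts of the boundary elements. The commutativity relations in $G$ forced by combining the three relators of $M, M_1, M_2$ then show that the residual commutators lie in $2\pi_1(G)$, or even vanish outright, because two hyperbolic lifts in $\mathcal{H}_0$ whose projections commute in $G$ lie in a common one-parameter subgroup of $\Gt$ and therefore commute there as well. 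The orientable $M_2$ case follows the same pattern, with the squares $\widetilde{\phi(B_j)}^2$ replaced by commutators $[\widetilde{\phi(B_j)}, \widetilde{\phi(B'_j)}]$ admitting the analogous doubling, and the Euler class contribution entering through its mod-$2$ reduction $\bar e(\phi|_{M_2})$.
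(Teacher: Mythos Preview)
The paper states this proposition without proof, treating it as a routine consequence of the fact that $o_2$ is the obstruction class in $H^2(M;\pi_1(G)/2\pi_1(G))$. So there is no ``paper's proof'' to compare against; I can only assess your sketch on its own merits.

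Your setup is right, and the observation that $\mathcal{H}_0$ is stable under inversion (so that the canonical lift of $\phi(D_l)^{-1}$ is $(\widetilde{\phi(D_l)}^0)^{-1}$) is exactly the point that makes the shared boundaries cancel. With this in hand, for a single gluing curve $D$ one gets immediately
\[
\Bigl(\widetilde{\phi(A_1)}^{\,2}\cdots \widetilde{\phi(C_{m_1})}^{\,0}\Bigr)\Bigl(\widetilde{\phi(B_1)}^{\,2}\cdots \widetilde{\phi(C'_{m_2})}^{\,0}\Bigr)
= r_1(\widetilde{\phi(D)}^{0})^{-1}\cdot r_2\,\widetilde{\phi(D)}^{0}=r_1r_2,
\]
and the general case follows by induction on the number of gluing curves.

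The gap is in what you do next. You introduce $r$ as ``the lift of the canonical relator for $M$ built from the same $\widetilde{\phi(A_i)},\widetilde{\phi(B_j)}$'', but the canonical relator $A_1^2\cdots A_{k_1}^2B_1^2\cdots B_{k_2}^2\,C_1\cdots C'_{m_2}$ is \emph{not} a relation among the van~Kampen generators: those generators satisfy instead $A_1^2\cdots C_{m_1}\,B_1^2\cdots C'_{m_2}=1$, with the boundary words interleaved. So your $r$ is not a central element at all, and the programme of showing $r^{-1}r_1r_2\in 2\pi_1(G)$ by commutator identities is aimed at the wrong target. The identity $[\tilde a,\tilde b^2]=[\tilde a,\tilde b]^2$ you invoke is correct when $[\tilde a,\tilde b]$ is central, but there is no reason for $[\tilde C_i^0,\tilde B_j]$ to be central here.

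What is actually needed is the statement that the $\bmod\;2$ value of the lifted relator is independent of which one-relator presentation of the surface group one uses. This is precisely the content of ``$o_2$ is the obstruction class in $H^2(M;\Z/2\Z)$'': both relators represent the $\Z/2$-fundamental class of $M$, and the lifted relator evaluates the obstruction cocycle on it. Once you invoke that, the displayed computation above \emph{is} the whole proof, and no commutator gymnastics are required. If you want to avoid cohomology entirely, you must instead show directly that the elementary moves taking one surface relator to another (cyclic permutation, conjugating a square generator, sliding a boundary letter past a square) each preserve the lifted value modulo $2\pi_1(G)$; that is doable, but it is a separate lemma you have not supplied.
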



\section{Path-lifting properties}\label{section:path}

In this section, we construct paths in the representation space of an orientable surface $\Sigma$ with non-empty boundary, that join two different components of $W (\Sigma)$. We want to be able to extend a representation of the orientable surface, to a representation of the non-orientable surface obtained by gluing a M\"obius strip along one of the boundary component. 

\subsection{Path-lifting in hyperbolic case}
In \cite{goldman}, Goldman stated the following:

\begin{lemma}\label{lem:releve-1}
Let $\Sigma$ be a three-holed sphere or a one-holed torus, and $C$ denote a boundary component of $\Sigma$. The evaluation map 
\begin{align*}
ev_C : W (\Sigma) & \longrightarrow \mathcal{H} \\
\phi & \longmapsto \phi (C)
\end{align*}
satisfies the path-lifting property.
\end{lemma}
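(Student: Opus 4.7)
The plan is to establish, for each of the two topological types of $\Sigma$, that the evaluation map $ev_C$ admits continuous local sections over the open set $\mathcal{H}$ of hyperbolic elements. Once this is in hand, the path-lifting property over the compact interval $[0,1]$ follows by a standard argument: cover $[0,1]$ by finitely many subintervals on each of which a local section is available, glue the resulting local lifts of the path $\{h_t\}$, and reconcile endpoints by short paths inside the fibers of $ev_C$, which are path-connected open submanifolds of $W(\Sigma)$.

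For the three-holed sphere, I would present $\pi_1(\Sigma) = \langle A_1, A_2, A_3 \mid A_1 A_2 A_3 \rangle$ and, after relabeling, assume $C = A_3$. A representation is then a pair $(X, Y) = (\phi(A_1), \phi(A_2)) \in G \times G$ and $ev_C(\phi) = (XY)^{-1}$. Given a basepoint $\phi_0$ with $X_0 = \phi_0(A_1)$ and $h_0 = ev_C(\phi_0)$, the assignment
\[
\sigma : h \longmapsto \bigl(X_0,\ X_0^{-1} h^{-1}\bigr)
\]
is a continuous local section of $ev_C$ near $h_0$, and its image lies in $W(\Sigma)$ for $h$ sufficiently close to $h_0$ since $\mathcal{H}$ is open in $G$ and the three hyperbolicity conditions ($X$, $Y$, $(XY)^{-1}$ hyperbolic) all persist under small perturbations.

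For the one-holed torus, $\pi_1(\Sigma) = \langle x, y \rangle$ and $C = [x,y]$, so $ev_C$ becomes the commutator map $\kappa : G \times G \to G$, $(X, Y) \mapsto X Y X^{-1} Y^{-1}$. I would invoke the classical fact that $\kappa$ is a submersion at every pair $(X, Y)$ with $[X, Y] \neq 1$; since hyperbolic elements are non-central, $\kappa$ is a submersion over $\mathcal{H}$, and the implicit function theorem supplies smooth local sections on a neighborhood of any prescribed basepoint, feeding into the same patching scheme.

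The main obstacle is the submersion claim for $\kappa$ in the one-holed torus case. A direct computation of $d\kappa_{(X_0, Y_0)}$, using the parametrization $(X_0 e^{s\alpha}, Y_0 e^{t\beta})$ with $\alpha, \beta \in \mathfrak{sl}_2(\R)$, shows that after right-translation to $\mathfrak{sl}_2(\R)$ its image is spanned by the ranges of $\mathrm{Ad}(X_0) - I$ and $I - \mathrm{Ad}(Y_0)$; these subspaces together fill the whole Lie algebra precisely when $X_0$ and $Y_0$ share no common invariant one-dimensional subspace of $\R^2$, which is equivalent to $[X_0, Y_0] \neq 1$ in $\psl$. Once this explicit verification is done, the rest of the proof is the formal patching argument outlined above.
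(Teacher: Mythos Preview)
The paper does not supply its own proof of this lemma; it is quoted from Goldman \cite{goldman}, so there is no in-paper argument to compare against. Your outline is a natural first attempt, but the gluing step has a genuine gap.

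The assertion that the fibers $ev_C^{-1}(h)$ are path-connected open submanifolds of $W(\Sigma)$ is false as stated. For both surfaces $W(\Sigma)$ already has three connected components, indexed by the relative Euler class $e\in\{-1,0,1\}$, and the fiber over a fixed $h\in\mathcal{H}$ typically meets all three; for the one-holed torus this is immediate, since the canonical lift $[\widetilde X,\widetilde Y]\in\Gt$ can lie in $\mathcal{H}_{-1}$, $\mathcal{H}_0$ or $\mathcal{H}_1$ while projecting to the same $h$. Even after restricting to a single Euler-class component, connectedness of the fiber is itself a non-trivial theorem (for the torus it is precisely Goldman's statement that $\widetilde{R_1}^{-1}(K)$ is connected, invoked later in this paper in the proof of Proposition~\ref{lem:releveJ2}). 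Without that input your patching scheme stalls: the section $\sigma(h)=(X_0,X_0^{-1}h^{-1})$ stays in $W(\Sigma)$ only while $X_0^{-1}h^{-1}$ remains hyperbolic, and nothing prevents the path $h_t$ from pushing the second coordinate out of $\mathcal{H}$ before you can legitimately switch sections. Goldman's actual argument is organized differently: for the pair of pants he lifts paths through the character map $\chi:\SL(2,\R)^2\to\R^3$ (the mechanism behind Lemma~\ref{lem:pathliftchi} here), and for the one-holed torus he analyzes the commutator map into $\Gt$ directly. Your submersion computation for $\kappa$ is correct and is one ingredient of that story, but ``submersion'' alone does not give path-lifting into $W(\Sigma)$ without the fiberwise connectedness you have asserted but not established.
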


We recall the following definition:

\begin{definition} 
A map $f : X \longrightarrow Y$ satisfies the {\it path-lifting property} if for every $x \in X$ and a path $\{ y_t \}_{0\leq t \leq 1}$ with $f (x) = y_0$, there exists a nondecreasing surjective map (a reparametrization of the path) $\tau : [0 , 1] \rightarrow [0,1]$ and a path $\{ x_s \}_{0\leq s \leq 1}$ such that $f(x_s) = y_{\tau (s)}$ and $x_0 = x$.
\end{definition}

We can generalize Lemma \ref{lem:releve-1} to any orientable surface with non-empty boundary.

\begin{proposition}\label{lem:releve}
Let $\Sigma$ be an orientable surface with non-empty boundary $C_1 , \dots , C_m \subset \partial \Sigma$. Let $C = C_1$ be one of the boundary components. The map
\begin{align*}
ev_C  : W(\Sigma) &\longrightarrow \mathcal{H}\\
\phi & \longmapsto \phi (C)
\end{align*}
satisfies the path-lifting property.
\end{proposition}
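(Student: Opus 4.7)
The plan is to argue by induction on $-\chi(\Sigma) = 2g + m - 2$, where $g$ is the genus of $\Sigma$ and $m \geq 1$ the number of boundary components. The base case $-\chi(\Sigma) = 1$ covers exactly the three-holed sphere and the one-holed torus, for which the statement is precisely Lemma~\ref{lem:releve-1}.

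For the inductive step, assume $-\chi(\Sigma) \geq 2$. I would cut $\Sigma$ along a simple closed curve $\gamma$ in its interior to obtain a decomposition $\Sigma = X \cup_\gamma Y$ into two orientable subsurfaces of strictly smaller complexity, with $C \subset \partial X$. Concretely: when $m \geq 2$, take $X$ to be a pair of pants whose boundary components are $C$, another boundary component $C_2$ of $\Sigma$, and $\gamma$; when $m = 1$ (which forces $g \geq 2$), take $Y$ to be a one-holed torus glued to $X$ along $\gamma$, with $C$ remaining on $\partial X$. Given $\phi \in W(\Sigma)$ with $\phi(C) = K_0$ and a path $\{K_t\}_{0 \leq t \leq 1}$ in $\mathcal{H}$, I would first apply the inductive hypothesis (which reduces to Lemma~\ref{lem:releve-1} when $-\chi(X) = 1$) to lift $\{K_t\}$ to a continuous path $\{\phi_t^X\}$ in $W(X)$ starting at $\phi|_X$. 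Since this lift remains in $W(X)$ and $\gamma \subset \partial X$, the induced path $\{\phi_t^X(\gamma)\}$ automatically lies in $\mathcal{H}$. I would then apply the inductive hypothesis to $Y$, with $\gamma$ playing the role of $C$, to lift $\{\phi_t^X(\gamma)\}$ to a path $\{\phi_t^Y\}$ in $W(Y)$ starting at $\phi|_Y$. Concatenating $\phi_t^X$ and $\phi_t^Y$ along their matching values on $\gamma$ yields the desired lift in $W(\Sigma)$, the final reparametrization being the composition of the two produced along the way.

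The main obstacle is that the whole scheme requires $\phi|_X \in W(X)$ and $\phi|_Y \in W(Y)$ from the start, which forces $\phi(\gamma)$ to be hyperbolic. The cutting curve $\gamma$ must therefore be chosen as a function of the starting representation $\phi$. Since $\mathcal{H}$ is open in $G$ and there are many isotopy classes of simple closed curves realizing the desired topological splitting, one expects such a $\gamma$ to always exist for any $\phi \in W(\Sigma)$; if a naive choice fails, one can modify $\gamma$ within its topological type (for instance by a Dehn twist in $\Sigma$) to restore hyperbolicity of its image. Making this selection rigorous uniformly in $\phi$, and verifying that the reparametrizations of the two sublifts can be combined into a single nondecreasing surjective reparametrization of $[0,1]$, is the technical heart of the argument.
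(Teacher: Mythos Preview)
Your inductive skeleton matches the paper's: induct on $-\chi(\Sigma)$, split off a piece of Euler characteristic $-1$ carrying $C$, lift there via Lemma~\ref{lem:releve-1}, then propagate into the complement by the inductive hypothesis. The divergence, and the gap, is entirely in how you arrange for the cutting curve $\gamma$ to have hyperbolic image.

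You propose to keep $\phi$ fixed and search among curves in the desired topological position, modifying by Dehn twists if needed. This cannot succeed in general. Take $\Sigma$ of genus $g\geq 1$ with $m=2$ boundary components and let $\phi$ have abelian image contained in a hyperbolic one-parameter subgroup; then $\phi\in W(\Sigma)$ as soon as $\phi(C_1),\phi(C_2)$ are nontrivial, and the relation in $\pi_1(\Sigma)$ forces $\phi(C_1)\phi(C_2)=I$. Every separating simple closed curve on $\Sigma$ is null-homologous, so \emph{every} candidate $\gamma$ (whether it bounds a pair of pants with $C_1,C_2$ or bounds a one-holed torus) satisfies $\phi(\gamma)=I$; Dehn twists do nothing, since they act trivially on the abelianised image. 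Your scheme stalls on such $\phi$. The paper resolves this by the opposite move: fix the interior curves $D_1,D_2$ once and for all and move $\phi$ instead. The reparametrisation built into the path-lifting definition lets you first travel \emph{within the fibre} $ev_C^{-1}(\gamma_0)$ to a representation whose values on $D_1,D_2$ lie in $\mathcal H$ (such representations exist in every component of the fibre, e.g.\ by a small perturbation of the free generators other than $C$), and only then begin the inductive lift. That single observation replaces your unresolved ``technical heart'' with a one-line reduction.
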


\begin{proof}
We prove this result by induction on the Euler characteristic. Lemma \ref{lem:releve-1} shows that the result holds if $\chi(\Sigma) = -1$. For a surface $\Sigma$ with $\chi(\Sigma) \leq -2$,  let $\displaystyle{\Sigma = \bigcup_{i=1}^{-\chi(\Sigma)} \Sigma_i}$ be a decomposition into pair-of-pants, such that $C$ is a boundary component of $\Sigma_1$, and if we denote the two other boundary components of $\Sigma_1$ by $D_1$ and $D_2$ and by $S_i$ the connected component of $\Sigma \setminus \Sigma_1$ that contains $D_i$ as a boundary component (the surface $S_i$ can be empty), then $S_1 \neq S_2$.

Let $\{ \gamma_t \}$ be a path in $\mathcal{H}$, and $\phi_0 $ a representation in $W(\Sigma)$ with $\gamma_0 = \phi_0 (C)$. In each connected component of $ev_C^{-1} (\gamma_0)$, there is a representation $\phi$ such that $\phi (D_1)$ and $\phi (D_2)$ are hyperbolic. Hence, we can assume without loss of generality that $\phi_0 (D_1)$ and $\phi_0 (D_2)$ are hyperbolic. By the path lifting property of Lemma \ref{lem:releve-1}, we can find a path of representations $\psi_t $ in $W (\Sigma_1)$, such that $\psi_t (C) =  \gamma_t$ and $\psi_0 = \phi_{| \pi_1 (\Sigma_1)}$. The paths $\psi_t (D_1)$ and $\psi_t  (D_2)$ are paths in $\mathcal{H}$. When $S_i$ is non-empty, we have $\chi (S_i) \geq \chi (\Sigma) +1$, hence we can apply the induction hypotheses to lift these paths to paths of representations $\{ \psi_t^{(i)} \}$ in $W ( S_i)$.

The path of representations $\{ \phi_t \}$ defined by
$$
\phi_t (\alpha ) = \left\{ \begin{array}{ll} \psi_t  (\alpha ) & \mbox{ if } \alpha \in \pi_1 (\Sigma_1), \\ \psi_t^{(j)} (\alpha) & \mbox{ if } \alpha \in \pi_1(S_j)\end{array} \right.
$$
satisfies $\phi_t (C) = \gamma_t$, and for all $1 \leq j \leq m$, the element $\phi_t (C_j)$ is in $\mathcal{H}$ for all $t \in [0,1]$. Thus, we have a path in $W(\Sigma)$ with the desired property.
\end{proof}

\subsection{Compatibility with $Q$}

Our objective is to find particular paths in the representation space of orientable surfaces, such that the path corresponding to the evaluation of the representation at one of the boundary component, satisfies the properties of Proposition \ref{prop:liftsquare}.

First, let $\Sigma$ be a three-holed sphere. Its fundamental group is free of rank two. Hence a representation in $\Hom(\pi, G)$ is determined by the image of the two generators, and $\Hom (\pi , G)$ identifies with $G \times G$. To find paths of representation in $\Hom (\pi , G)$ it is sufficient to find  paths in $G \times G$. As $G = \psl$ is covered by  $\SL (2 , \R)$, it is sufficient to find paths in $\SL (2 , \R) \times \SL (2 , \R) $. 

The character map is defined by:
$$\begin{array}{rccc}
\chi : &\SL (2 , \R) \times \SL (2 , \R) & \longrightarrow &\R^3 \\
& (X  , Y) &\longmapsto  & \left( \begin{array}{c} \tr (X) \\ \tr (Y) \\ \tr (XY) \end{array} \right) .
\end{array}$$
  
This map and the following lemma of Goldman (\cite{goldman} Corollary 4.5) will  be useful in the sequel. 

\begin{lemma}\label{lem:pathliftchi}
Let $\kappa (x,y,z) = x^2 + y^2 + z^2 - xyz - 2$ and
$$\Omega_{\R} = \{ (X , Y ) \in \SL (2 , \R) \times \SL (2 , \R) \, | \, [ X , Y ] \neq I \} .$$
Then the character map
$$\chi : \Omega_{\R} \longrightarrow \R^3 \setminus [-2 , 2]^3 \cap \kappa^{-1} ([-2 , 2])$$
satisfies the path-lifting property.
\end{lemma}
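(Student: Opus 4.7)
The plan is to use the Fricke identity $\tr[X,Y] = x^2 + y^2 + z^2 - xyz - 2 = \kappa(x,y,z)$, valid whenever $x = \tr X$, $y = \tr Y$, $z = \tr XY$, together with explicit local sections and a patching argument. Since $(x,y,z) \notin [-2,2]^3$ forces at least one of $|x|, |y|, |z|$ to exceed $2$, the codomain is covered by the three open sets $U_x = \{|x|>2\}$, $U_y = \{|y|>2\}$, $U_z = \{|z|>2\}$.

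On each piece I would construct an explicit continuous local section of $\chi$. For instance on the component of $U_x$ where $x>2$, choose $\lambda > 1$ with $\lambda + \lambda^{-1} = x$ and set
$$X = \begin{pmatrix} \lambda & 0 \\ 0 & \lambda^{-1} \end{pmatrix}, \qquad Y = \begin{pmatrix} a & ad-1 \\ 1 & d \end{pmatrix},$$
where the trace equations $a + d = y$ and $\lambda a + \lambda^{-1} d = z$ solve uniquely and continuously for $a, d$ as functions of $(x,y,z)$, and the top-right entry is forced by $\det Y = 1$. The lower-left entry $1$ guarantees $[X,Y] \neq I$, so the section lands in $\Omega_\R$. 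Analogous sections exist on the other component of $U_x$ and on $U_y$, $U_z$ by the obvious symmetries of the Fricke variety.

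Given a path $\gamma(t) = (x_t, y_t, z_t)$ in the codomain with initial lift $(X_0, Y_0) \in \Omega_\R$, I would cut $[0,1]$ into finitely many closed subintervals each of whose image lies in one of $U_x, U_y, U_z$, and produce lifts piecewise via the corresponding sections. Two such lifts with the same image under $\chi$ differ by a simultaneous $\SL(2,\R)$-conjugation: this is Fricke's theorem for the character variety of the free group $F_2$, which applies on $\Omega_\R$ precisely because the condition $[X,Y] \neq I$ enforces irreducibility. To glue, I reparametrize $\gamma$ to be constant on a small neighborhood of each transition time and then use path-connectedness of $\SL(2,\R)$ to interpolate between consecutive lifts by a continuous path of simultaneous conjugations, during which $\gamma$ is held stationary.

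The hard part is establishing the Fricke rigidity input: that on $\Omega_\R$ the fiber $\chi^{-1}(x,y,z)$ really is a single $\SL(2,\R)$-orbit, so that lifts produced by different local sections can be reconciled by conjugation. Once this is in hand, continuity of the patched lift follows from the continuity of the local sections together with the fact that each interpolating conjugation keeps $\chi$ constant, so the concatenated path is the desired lift of a reparametrization of $\gamma$.
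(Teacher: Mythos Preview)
The paper does not give its own proof of this lemma; it is quoted as Corollary~4.5 of Goldman~\cite{goldman}, whose argument proceeds by first showing that $\chi$ restricted to $\Omega_{\R}$ is a submersion and then analysing the resulting fibration structure directly.

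Your approach has a genuine gap at the ``Fricke rigidity'' step. The claim that $[X,Y]\neq I$ forces irreducibility is false: any pair of non-commuting upper-triangular matrices, for instance $X=\bigl(\begin{smallmatrix}2&0\\0&1/2\end{smallmatrix}\bigr)$ and $Y=\bigl(\begin{smallmatrix}1&1\\0&1\end{smallmatrix}\bigr)$, is reducible yet lies in $\Omega_{\R}$; such pairs occur exactly on the hypersurface $\kappa=2$, which does meet the target. More seriously, even on the irreducible locus $\kappa\neq 2$ the fibre of $\chi$ is \emph{not} a single $\SL(2,\R)$-orbit. Fricke--Vogt over $\mathbb{C}$ together with Galois descent yields only that the real fibre is a single $\mathrm{PGL}(2,\R)$-orbit, and since the stabiliser of an irreducible pair in $\mathrm{PGL}(2,\R)$ is trivial, that orbit breaks into two $\SL(2,\R)$-components. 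Concretely: your section on $\{x>2\}$ produces $Y$ with lower-left entry $+1$; conjugation by $\bigl(\begin{smallmatrix}1&0\\0&-1\end{smallmatrix}\bigr)$ (determinant $-1$) gives a pair with identical character and lower-left entry $-1$, while any $g\in\SL(2,\R)$ fixing the diagonal $X$ is itself diagonal and rescales that entry by a positive factor $t^{-2}$, never changing its sign. Hence the prescribed initial lift $(X_0,Y_0)$ may lie in the orbit opposite to your section's value at $t=0$, and no path of $\SL(2,\R)$-conjugations connects them; the patching cannot even begin. A repair requires either Goldman's submersion argument or a second family of sections covering the other sheet, together with a verification that the sheet does not jump along a continuous lift.
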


In particular, this Lemma is a key ingredient of the following proposition:

\begin{proposition}\label{lem:releveJ}
Let $\Sigma$ be a three-holed sphere, with boundary components $B , C , K$. Let $\phi$ be a representation in $W(\Sigma)$ such that $e( \phi) = -1$. There exists a path of representations $\{ \phi_t \}_{0\leq t\leq 1}$ such that :
\begin{enumerate}
\item $\phi_0 = \phi$;
\item $e (\phi_1) = 1$;
\item The elements $\phi_t (B)$ and $\phi_t (C)$ are in $\mathcal{H}$ for all $t \in [0 , 1]$;
\item If $\{ \widetilde{\phi_t} \}_{t\in [0,1]}$ is a lift of the path in $\Hom (\pi, \Gt)$ such that \hbox{$\widetilde{\phi_0} (K) \in z^\epsilon \mathcal{J}$},  then the path $\{ z^\epsilon  \widetilde{\phi_t} (K) \}$ satisfies the hypotheses of Proposition \ref{prop:liftsquare}.
\end{enumerate} 
\end{proposition}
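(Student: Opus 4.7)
Since $\pi_1(\Sigma)$ is free on $\{B, C\}$ with $K = (BC)^{-1}$, the representation variety $\Hom(\pi_1(\Sigma), G)$ is identified with $G\times G$ via $\phi\mapsto(X,Y)=(\phi(B),\phi(C))$, and $\phi(K)=(XY)^{-1}$. My plan is to realize the path $\{\phi_t\}$ as an explicit family of pairs $(X, Y_t)$, with $\phi_t(B) = X$ a \emph{fixed} hyperbolic element and $\phi_t(C) = Y_t$ a continuous path of hyperbolic elements in $G$ that passes through the reducible configuration $Y_{1/2} = X^{-1}$ at the midpoint. At that critical moment $\phi_{1/2}(K) = I \in G$, and a careful choice of lift to $\Gt$ will force $\widetilde{\phi_{1/2}}(K) \in Z(\Gt)$ to be an odd central element of the form $z^{2k+1}$.

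Concretely, after an initial deformation within the Euler-class $-1$ component of $W(\Sigma)$, I may assume $\phi_0(B) = X$ for a positive hyperbolic element with trace large enough that small elliptic perturbations of $X^{-1}$ remain hyperbolic in $G$. Near $t = 1/2$ I set $Y_t = X^{-1} g R_{-\alpha_t} g^{-1}$, where $g \in G$ is fixed on each side of $1/2$ (possibly distinct values) and $\alpha_t$ is smooth in $t$ with $\alpha_{1/2} = 0$ and $\alpha'_{1/2} \neq 0$. Away from the midpoint, the path is extended by any continuous path of hyperbolic elements of $G$, using Lemma~\ref{lem:pathliftchi} to lift on the irreducible portion. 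For the lift to $\Gt$, I fix $\widetilde{X} \in \mathcal{H}_0$ and track the continuous lift of $Y_t$ inside the component $\mathcal{H}_{-1}$, which is legitimate throughout since $Y_t$ is hyperbolic. Then $\widetilde{Y}_{1/2} = z^{-1}\widetilde{X}^{-1}$, so $\widetilde{\phi_{1/2}}(K) = z$, and the transverse sign change of $\alpha_t$ forces the continuous lift $\widetilde{\phi_t}(K) = g\widetilde{R}_{\pi + \alpha_t}g^{-1}$ to cross from $\mathcal{E}_0$ through $z$ into $\mathcal{E}_1$, so that globally $\widetilde{\phi_t}(K)$ passes from $\mathcal{H}_0$ at $t=0$ to $\mathcal{H}_2$ at $t=1$.

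A computation with canonical $\mathcal{H}_0$-lifts then yields the relation $\widetilde{\phi_1}(K) = z^{1+e(\phi_1)}\widetilde{K}_1^0$; combined with $\widetilde{\phi_1}(K) \in \mathcal{H}_2 = z^2 \mathcal{H}_0$, this forces $e(\phi_1) = +1$. The four conclusions of the proposition then follow directly. Conditions (1) and (2) hold because the continuous lift traverses $\mathcal{H}_0, \mathcal{E}_0, \{z\}, \mathcal{E}_1, \mathcal{H}_2$ (with parabolic boundaries of even index in between), all inside the appropriate translate of $\mathcal{J}$, and coincides with a central element only at $t = 1/2$. Condition (3) is satisfied by the explicit local form $\widetilde{\phi_t}(K) = g\widetilde{R}_{\pi+\alpha_t}g^{-1}$, so that $\theta_t := \pi + \alpha_t \to \pi$ as required, with the two conjugators $g_s$ and $h_s$ of Proposition~\ref{prop:liftsquare} possibly distinct. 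If a different lift of the path is chosen, shifting by an overall power of $z$, the same verification applies after rescaling the lift $\widetilde{R}$ and $\epsilon$ accordingly.

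The main obstacle is the bookkeeping of lifts in $\Gt$: choosing the continuous lift of $C$ in $\mathcal{H}_{-1}$ rather than $\mathcal{H}_0$ is what makes $\widetilde{\phi_{1/2}}(K) = z$ an \emph{odd} central element (so that condition (3) applies nontrivially), and the transverse sign change of $\alpha_t$ is what forces the continuous lift to \emph{cross through} $z$ into $\mathcal{H}_2$ rather than loop back into $\mathcal{H}_0$, thereby realizing the required Euler-class shift of $+2$. A subsidiary difficulty is interfacing the explicit parametrization near the reducible midpoint (where Lemma~\ref{lem:pathliftchi} does not apply, as the representation is no longer in $\Omega_\R$) with the character path-lifting away from it; this is handled by the connectedness of the hyperbolic locus in $G$.
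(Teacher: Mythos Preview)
Your overall strategy coincides with the paper's: pass through the reducible locus where $\phi_t(B)\phi_t(C)=I$ in $G$, parametrize explicitly near that point by conjugated rotations, and track lifts in $\Gt$ to verify condition~(4). The paper does exactly this, first building a path from the ``diagonal'' pair $(\widetilde{\phi(B)}^0,(\widetilde{\phi(B)}^0)^{-1})$ outward using the character map, then gluing to a symmetric construction on the $e=+1$ side. So the core idea is the same; what differs is the level of detail, and here your write-up has two issues worth fixing.

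First, the index bookkeeping is off. With $\widetilde X\in\mathcal H_0$ and $\widetilde Y_0\in\mathcal H_{-1}$ you get $\widetilde X\widetilde Y_0=z^{-1}\widetilde X^0\widetilde Y_0^0\in z^{-1}\mathcal H_{-1}=\mathcal H_{-2}$ (using $e(\phi_0)=-1$), so $\widetilde{\phi_0}(K)\in\mathcal H_2$, not $\mathcal H_0$. Likewise your relation should read $\widetilde{\phi_1}(K)=z^{\,1-e(\phi_1)}\widetilde K_1^0$, not $z^{\,1+e(\phi_1)}$. The two errors cancel and you still get $e(\phi_1)=1$, but the trajectory is $\mathcal H_2\to\mathcal E_1\to\{z\}\to\mathcal E_0\to\mathcal H_0$, the reverse of what you wrote.

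Second, and more substantively, the phrase ``away from the midpoint, the path is extended by any continuous path of hyperbolic elements of $G$'' does not by itself guarantee that $\widetilde{\phi_t}(K)$ stays in $z^\epsilon\mathcal J$. With $\widetilde X\in\mathcal H_0$ and $\widetilde Y_t\in\mathcal H_{-1}$ both fixed, the product $\widetilde X\widetilde Y_t$ can still wander into $\mathcal H_{-1}$ (take $Y_t$ a hyperbolic sharing an axis with $X$), which would push $\widetilde{\phi_t}(K)$ through $\mathcal P_{\text{odd}}$ and out of $\mathcal J$. This is precisely where the paper does real work: it prescribes an explicit straight-line path for the characters $(b_t,c_t,k_t)$ with $k_t<2$ throughout, lifts it via Lemma~\ref{lem:pathliftchi}, and uses that trace bound to exclude the even-index $\mathcal H\cup\mathcal P$ (hence, after the $z^N$ shift, the odd-index ones). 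Your reference to Lemma~\ref{lem:pathliftchi} is the right tool, but you must specify the character path and check the analogous trace inequality; otherwise the $\mathcal J$-containment in condition~(4) is asserted rather than proved.
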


\begin{proof}
Let $\phi \in W (\Sigma)$ such that $e(\phi) = -1$, and let $\widetilde{\phi (B)}^0 , \widetilde{\phi (C)}^0$ be the unique lifts in $\mathcal{H}_0$. The element $\widetilde{\phi (B)}^0 \widetilde{\phi (C)}^0$ is in $\mathcal{H}_{-1}$, and hence we have $\chi (\widetilde{\phi (B)}^0 , \widetilde{\phi (C)}^0 )= (b,c,k)$ with $b, c \in   ]2 , +\infty[$ and $k \in ]-\infty , -2 [.$ Note that $\kappa (b,c,k) \neq 2$.

Let $g$ be an element of $\psl$ such that, for $\theta \in ] 0, \pi [ $, the elliptic element $R_{\theta} ' = g R_\theta g^{-1}$ satisfies $[ \phi (B) , R_\theta '] \neq I $. For $0 \leq \theta \leq \pi$, we take the lift of $R_{\theta}'$ in $\Gt$ belonging to $\mathcal{E}_0$, and we denote it also $R_\theta '$. The path $C_t =  (\widetilde{\phi (B)}^0)^{-1} R_t '$ in $\Gt$ is continuous and starts from $\mathcal{H}_0$. Hence, as $\mathcal{H}_0$ is an open set, there exists $s \in ] 0 , 1 [$ such that $C_t$ is in $\mathcal{H}_{0}$ for all $t \leq s$. Let $B_t = \widetilde{\phi (B) }^0$ in $\Gt$ for all $t \leq s$, so the element $B_t C_t = R_t '$ is in $\mathcal{E}_{0}$. Thus, we have $\chi (B_s , C_s) = ( b , c' , k')$, with $b>2$, $c' > 2$ and $k' \in ]-2 , 2 [$.

The path in $\R^3$ defined by:
\begin{align*}
b_t & = b ,\\
c_t & = ( 1 - \frac{t-s}{1-s}) c' +\frac{t-s}{1-s} c , \\
 k_t & = ( 1 - \frac{t-s}{1-s}) k' +\frac{t-s}{1-s} k,
\end{align*}
never meets the set $[-2 , 2]^3 \cap \kappa^{-1} ([-2 , 2])$. We also have $[ B_s , C_s ] = [R_t ' , (\phi (B))^{-1} ] \neq I$ and according to the Lemma \ref{lem:pathliftchi}, the path can be lifted to a path $\{ (B_t , C_t ) \}_{t\geq s}$ starting from $(B_s , C_s)$ such that 
$$\chi (B_t , C_t) = (b_t , c_t , k_t).$$
Moreover, $k_t < 2 $ for all $t \geq s$, and hence the path $\{ B_t C_t \}_{t\geq s}$ never meets the set $\mathcal{H}_0 \cup \mathcal{P}_0$. We infer that $B_1 C_1$ is an element of $\mathcal{H}_{-1}$, and that the representation given by $(B_1 , C_1)$ has Euler class $-1$.

We obtain $\chi (B_1, C_1) = (b,c,k) = \chi (\widetilde{\phi (B)}^0,  \widetilde{\phi (C)}^0 )$. We know that $\kappa (b,c,k) \neq 2$ and hence the two couples are conjugated by an element $g \in \pgl$. An element of $\pgl$ that is not in $\psl$ conjugates a representation in $e^{-1} (-1)$ to a representation in $e^{-1} (1)$. As each couple defines a representation with Euler class $-1$, they are conjugated by an element of $\psl$. The path 
$$ \{ (g B_t g^{-1} , g C_t g^{-1} ) \}_{t\in [0, 1]}$$
joins the representation $\phi$ with the representation $\phi '$ defined by 
\begin{align*}
\phi ' (B) & = g \phi (B) g^{-1},\\
\phi'(C) & = g (\phi(B) )^{-1} g^{-1} .
\end{align*}

Let $\psi \in W (M)$ be a representation with $e(\psi) = 1$. We can use the same arguments to prove that there exists a path joining the representations $\psi$ and $\psi'$, where $\psi'$ is defined by 
\begin{align*}
\psi ' (B) & = h \psi (B) h^{-1}, \\
\psi'(C) & = h (\psi(B) )^{-1} h^{-1}.
\end{align*}

Therefore, it suffices to find a path joining $\phi'$ to $\psi'$ that satisfies the desired properties to prove the proposition. 
The elements $ g \phi' (B) g^{-1}$ and $h \psi '(B) h^{-1}$ are in $\mathcal{H}$ and can be connected by a path $\{ H_t \}_{t\in [0,1]} $ that lies inside $\mathcal{H}$. This defines a path $\{ \phi_t ' \}_{t\in [0,1]}$ of representations from $\phi '$ to $\psi '$ by:
 \begin{align*}
\phi_t ' (B) & = H_t \\
\phi_t '(C) & = (H_t )^{-1}.
\end{align*}

We have constructed paths from $\phi$ to $\phi'$, from $\phi'$ to $\psi'$ and from $\psi'$ to $\psi$, satisfying  the condition $(3)$. Hence, we have a path $\{ \phi_t \}_{t\in [0,1]}$ that satisfies $(1) , (2) $ and $(3)$. 

Finally, let $\widetilde{\phi_t } : \pi \rightarrow \Gt$ be a lift of the path $\phi_t$ such that $\widetilde{\phi_0 } (K)$ is  in $z^\epsilon \mathcal{J}$. Let $\widetilde{\phi_t  (B)}^0$ and $\widetilde{\phi_t  (C)}^0$ be the lifts in $\mathcal{H}_0$. Then there exists $N \in \N$ such that for all $t \in [0 , 1]$, we have 
$$\widetilde{\phi_t } (K) = \widetilde{\phi_t} (B) \widetilde{\phi_t} (C) =  z^N \left( \widetilde{\phi_t  (B)}^0 \widetilde{\phi_t  (C)}^0 \right). $$
The element $\widetilde{\phi_0 } (K)$ is  in $\mathcal{H}_{N-1}$ because $e(\phi) = -1$. Hence 
$$\widetilde{\phi_0 } (K) \in z^\epsilon \mathcal{J} \cap \mathcal{H}_{N-1},$$
and thus $N$ and $\epsilon$ are of different parity.

In the construction, the path $\{ \widetilde{\phi_t (B)}^0 \widetilde{\phi_t (C)}^0 \}_{t\in [0,1]}$ does not meet any of the $\mathcal{H}_i \cup \mathcal{P}_i$ where $i$ is even. We infer that $\widetilde{\phi_t } (K)$ does not meet any of the $z^{\epsilon} \mathcal{H}_i \cup \mathcal{P}_i$ where $i$ is odd, which means $\widetilde{\phi_t } (K)$ lies within $z^\epsilon \mathcal{J}$. The other hypotheses of Proposition \ref{prop:liftsquare} are naturally satisfied by construction.

\end{proof}

For the one-holed torus, the following proposition gives a similar statement.

\begin{proposition}\label{lem:releveJ2}
Let $\Sigma$ be a one-holed torus, with fundamental group 
$$\pi_1 (\Sigma) = \langle X , Y , K | [X,Y] = K\rangle.$$ 
Let $\phi$ be a representation in $W(\Sigma)$ such that $e( \phi) = -1$. There exists a path of representations $\{ \phi_t \}_{t\in [0,1]}$ in $\Hom (\pi , G)$ such that:
\begin{enumerate}
\item $\phi_0 = \phi$;
\item $e (\phi_1) = 1$;
\item If $\{ \widetilde{\phi_t} \}_{t\in [0,1]}$ is a lift of the path in $\Hom (\pi, \Gt)$ such that \hbox{$\widetilde{\phi_0} (K) \in z^\epsilon \mathcal{J}$,}  then the path $\{z^\epsilon  \widetilde{\phi_t} (K) \}_{t\in [0,1]}$ satisfies the hypotheses of Proposition \ref{prop:liftsquare}.
\end{enumerate}
\end{proposition}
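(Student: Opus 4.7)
The strategy mirrors that of Proposition~\ref{lem:releveJ}, with the product $K = BC$ replaced by the commutator $K = [X, Y]$ and with Fricke's identity $\tr [X, Y] = \kappa(\tr X, \tr Y, \tr XY)$ serving as the bridge between the character map and the commutator's trace. No boundary conditions on $X, Y$ need to be preserved (the only boundary is $K$), so the construction is structurally simpler than in the three-holed-sphere case, but requires more care in tracking the lift $\widetilde{\phi_t}(K) = [\widetilde{\phi_t}(X), \widetilde{\phi_t}(Y)]$ through $\Gt$.

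First, following Step~1 of the proof of Proposition~\ref{lem:releveJ}, I would introduce an auxiliary elliptic element $R'_\theta = g R_\theta g^{-1} \in \psl$, lift it to $\mathcal{E}_0 \subset \Gt$, and deform $\phi(Y)$ along $Y_t = \phi(Y) \cdot R'_t$ for $t \in [0, s]$ with $s$ small. Choosing $g$ so that $[\phi(X), R'_\theta] \neq I$ keeps us in the irreducible locus $\Omega_{\R}$ of Lemma~\ref{lem:pathliftchi}. Since $\mathcal{H}_{-1}$ is open, the lift $\widetilde{\phi_t}(K)$ starts in it and, as $t$ increases, crosses the boundary into a neighbouring elliptic region $\mathcal{E}_{-1}$ or $\mathcal{E}_0$ (both contained in $z^\epsilon \J$ after the appropriate shift). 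Concretely, the trace $\kappa_t = \tr \phi_t(K)$ moves from a value less than $-2$ into $(-2, 2)$, and the resulting character $(x_s, y_s, z_s)$ lies outside the bad set $[-2, 2]^3 \cap \kappa^{-1}([-2, 2])$.

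From this intermediate state, Lemma~\ref{lem:pathliftchi} lifts a path in character space back to the original character $(x_0, y_0, z_0) = \chi(\phi(X), \phi(Y))$, ending at a pair $(X_1, Y_1)$ in the same $\pgl$-orbit as $(\phi(X), \phi(Y))$. Exactly as in Proposition~\ref{lem:releveJ}, if the intermediate passage through the elliptic sheet is arranged so that $(X_1, Y_1)$ is $\pgl$-conjugate but not $\psl$-conjugate to $(\phi(X), \phi(Y))$, then the resulting $\phi'$ satisfies $e(\phi') = +1$. Repeating the construction from an arbitrary $\psi$ with $e(\psi) = +1$ produces $\psi'$. I would then concatenate the paths $\phi \to \phi' \to \psi' \to \psi$; the middle segment is realised by deforming through commuting pairs $(X, Y)$ for which $[X, Y] = I$ and whose lifted commutator is $1 \in \mathcal{E}_0 \subset z^\epsilon \J$.

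The main technical obstacle is to certify, in Step~2, that the continuously lifted commutator $\widetilde{\phi_t}(K)$ stays inside $z^\epsilon \J$ throughout, that is, avoids every odd hyperbolic region $\mathcal{H}_{2k+1}$. The character map is blind to the sheet of $(X, Y)$, so this is a topological constraint that must be imposed directly on the lifted path in $\Gt$; the check is analogous to the final sheet-counting computation in the proof of Proposition~\ref{lem:releveJ}. A secondary point is verifying condition~(3) of Proposition~\ref{prop:liftsquare}, but since the construction arranges $\widetilde{\phi_t}(K)$ to transit only through elliptic regions and never to equal a central element $z^{2k+1}$, that hypothesis is vacuous, and conditions~(1) and~(2) follow by inspection.
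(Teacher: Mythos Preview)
The paper's proof is shorter and uses a different tool: instead of the character map of Lemma~\ref{lem:pathliftchi}, it invokes Goldman's path-lifting theorem for the commutator map $\widetilde{R_1}\colon G\times G\to\Gt$, $(A,B)\mapsto[A,B]$ (\cite{goldman}, Theorem~7.1). One chooses directly a path $K_t$ in the image of $\widetilde{R_1}$ from $\mathcal{H}_{-1}$ to $\mathcal{H}_1$ so that $\{zK_t\}$ satisfies the hypotheses of Proposition~\ref{prop:liftsquare}, then lifts it to a path $(X_t,Y_t)$ in $G\times G$; connectedness of the fibre $\widetilde{R_1}^{-1}(K_0)$ lets one join the lift's starting point to $\phi$.

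Your route has a genuine gap. You claim the character-lifting construction can be arranged to end at a pair $(X_1,Y_1)$ that is $\pgl$- but not $\psl$-conjugate to $(\phi(X),\phi(Y))$, giving $e(\phi')=+1$. This is impossible along any path produced by Lemma~\ref{lem:pathliftchi}: the lift $\widetilde{K_t}=[\widetilde{X_t},\widetilde{Y_t}]\in\Gt$ is canonical and continuous; the lemma requires $[X_t,Y_t]\neq I$, so $\widetilde{K_t}\neq 1$; and you keep $\tr K_t<2$, so $\widetilde{K_t}\notin\mathcal{H}_0\cup\mathcal{P}_0$. But $\{1\}\cup\mathcal{H}_0\cup\mathcal{P}_0$ separates $\mathcal{H}_{-1}$ from $\mathcal{H}_1$ in $\Gt$, so $\widetilde{K_1}$ is trapped in $\mathcal{H}_{-1}$ and the Euler class is unchanged. (In Proposition~\ref{lem:releveJ} the analogous leg also preserves $e$; the change occurs only across the middle segment where $BC=I$, which you have misattributed.) Any successful path from $e=-1$ to $e=+1$ with $z^\epsilon\widetilde{K_t}$ remaining in $\J$ must therefore send $\widetilde{K_t}$ through the identity $1$, and then $z^\epsilon\widetilde{K_t}=z^\epsilon$ is an odd central element $z^{2k+1}$; hence condition~(3) of Proposition~\ref{prop:liftsquare} is \emph{not} vacuous, contrary to your final paragraph. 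That crossing is exactly where control over the path is needed, and this is straightforward when $K_t$ is prescribed in $\Gt$ directly (as the paper does) but delicate when working through $(X_t,Y_t)$.
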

\begin{proof}
Let $\widetilde{\phi}$ be a lift of $\phi$. The element $\widetilde{\phi} (K) = \widetilde{\phi} ([X , Y])$ is uniquely determined and belongs to $\mathcal{H}_{-1}$, thus it belongs  to $z \J$. Let $K_t$ be a path starting at $K_0 = \widetilde{\phi} (K)$ and ending at an element $K_1 \in \mathcal{H}_1$ that lies in the image of the map
\begin{align*}
\widetilde{R_1} : G \times G & \longrightarrow \Gt \\
(A , B) & \longmapsto [A , B] .
\end{align*}
We can choose $K_t$ so that the path $\{ z K_t \}_{t\in [0,1]} $ satisfies the hypotheses of Proposition \ref{prop:liftsquare}. According to Goldman (\cite{goldman}, Theorem 7.1), any path in the image of $\widetilde{R_1}$ can be lifted to a path in $G\times G$. Hence, there exists a path  $\{ (X_t , Y_t) \}_{t\in [0,1]} \in G^2$ such that $\widetilde{R_1} (X_t , Y_t) = K_t$ for all $t \in [0,1]$. 

Moreover, the preimage $\widetilde{R_1} (K_0)$ is connected, and hence we can find a path joining $\phi$ to the representation defined by $(X_0 , Y_0)$. The path of representations defined by $(X_t , Y_t)$ has the desired properties. 
\end{proof}

We can establish a generalization of this proposition to any orientable surface with boundary, as follows:

\begin{proposition}\label{prop:eulermax}
Let $\Sigma$ be an orientable surface with $m\geq 1$ boundary components $C_1 , \dots , C_m$ with $\chi (\Sigma) \leq -1$. Let $\phi$ be a representation in $W (\Sigma)$ with relative Euler class $e(\phi) = n \leq -\chi(\Sigma) -2$. There is a path $\{ \phi_t \}_{t\in [0,1]}$ in $\Hom (\pi , G)$ such that:
\begin{enumerate}
\item $\phi_0 = \phi$;
\item $e (\phi_1) = n + 2$;
\item $\phi_t (C_j) \in \mathcal{H}$ for all $j > 1 $ and all $t \in [0 , 1]$;
\item If $\{ \widetilde{\phi_t} \}_{t\in [0,1]}$ is a lift of the path in $\Hom (\pi, \Gt)$ such that \hbox{$\widetilde{\phi_0} (C_1) \in z^\epsilon \mathcal{J}$,}  then  the path $\{z^\epsilon  \widetilde{\phi_t} (C_1) \}_{t\in [0,1]}$ satisfies the hypotheses of Proposition \ref{prop:liftsquare}. 
\end{enumerate} 
\end{proposition}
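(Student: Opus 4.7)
The plan is to cut $\Sigma$ into a pair of pants $\Sigma_1$ containing $C_1$, glued to a complementary subsurface $\Sigma'$ along a simple closed curve $K$, and then to reduce to the base case Proposition \ref{lem:releveJ} applied to $\Sigma_1$, propagating the change through $\Sigma'$ by the path-lifting property of Proposition \ref{lem:releve}. When $\Sigma$ has positive genus and only one boundary component, I would instead cut off a one-holed torus containing $C_1$ and invoke Proposition \ref{lem:releveJ2} at the analogous stage; the overall structure of the argument is unchanged.

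First I would deform $\phi$ within $W(\Sigma)$ to arrange that $\phi(K)\in\mathcal{H}$ and that $e(\phi|_{\Sigma_1})=-1$. Making $\phi(K)$ hyperbolic is standard, since the fiber of boundary evaluation at $\partial\Sigma$ is path-connected and such a deformation preserves the relative Euler class. Ensuring $e(\phi|_{\Sigma_1})=-1$ uses the additivity $n=e(\phi|_{\Sigma_1})+e(\phi|_{\Sigma'})$ together with the hypothesis $n\leq -\chi(\Sigma)-2$: gluing a $\Sigma_1$-representation of Euler class $-1$ to a $\Sigma'$-representation of Euler class $n+1$ (which fits inside the Milnor--Wood bound $|n+1|\leq -\chi(\Sigma')$) produces an element of $W(\Sigma)\cap e^{-1}(n)$ with the desired property, and Goldman's connectedness of this level set on the orientable surface $\Sigma$, from \cite{goldman}, supplies a path from $\phi$ to such a representation. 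Along this preliminary path $\phi_t(C_1)$ stays in $\mathcal{H}$, so its lift stays in a single component $\mathcal{H}_\ell\subset z^\epsilon\mathcal{J}$ and the hypotheses of Proposition \ref{prop:liftsquare} are satisfied trivially.

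Once $e(\phi|_{\Sigma_1})=-1$, Proposition \ref{lem:releveJ} applied to $\Sigma_1$, with $C_1$ playing the role of its distinguished boundary $K$ and the other two boundaries of $\Sigma_1$ playing the roles of $B$ and $C$, produces a path $\{\psi_t\}$ in $W(\Sigma_1)$ from $\phi|_{\Sigma_1}$ to a representation of Euler class $+1$, such that $\psi_t(K)\in\mathcal{H}$ and a lift of $\psi_t(C_1)$ satisfies the hypotheses of Proposition \ref{prop:liftsquare}. The path $\psi_t(K)$ is hyperbolic and therefore lifts through $\Sigma'$ by Proposition \ref{lem:releve} to a path $\{\psi_t'\}$ in $W(\Sigma')$ starting at $\phi|_{\Sigma'}$. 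Gluing $\psi_t$ and $\psi_t'$ along $K$ and concatenating with the preliminary path yields the required path, with $e(\phi_1)=n+2$ by additivity, properties (1)--(3) from the construction, and property (4) inherited directly from $\psi_t$.

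The main obstacle is the preliminary reduction step. It relies on Goldman's connectedness of $W(\Sigma)\cap e^{-1}(n)$ for orientable surfaces with boundary, which is expected but must be invoked with care, and on verifying that the preliminary path does not disturb the lift condition on $C_1$. The numerical hypothesis $n\leq -\chi(\Sigma)-2$ is exactly the slack that makes the Milnor--Wood bounds on $\Sigma_1$ and $\Sigma'$ simultaneously compatible with this redistribution; weakening it would leave no room for a representation with $e(\phi|_{\Sigma_1})=-1$ inside $e^{-1}(n)$, and the approach above would fail at the outset.
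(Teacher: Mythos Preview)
Your overall strategy coincides with the paper's: isolate a $\chi=-1$ subsurface $\Sigma_1$ carrying $C_1$, deform within the connected set $e^{-1}(n)\cap W(\Sigma)$ so that the restriction to $\Sigma_1$ has Euler class $-1$, apply Proposition~\ref{lem:releveJ} to flip that to $+1$ while the other two boundaries of $\Sigma_1$ remain hyperbolic and the $C_1$-lift satisfies the square-lifting hypotheses, and then propagate the hyperbolic boundary paths through the complement via Proposition~\ref{lem:releve}. The paper carries this out essentially verbatim (and, like you, invokes Goldman's connectedness of $e^{-1}(n)$ for the preliminary reduction).

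There is, however, a genuine gap in your treatment of the case $m=1$, $g\ge 2$. A one-holed torus has exactly one boundary component; if it ``contains $C_1$'' then $C_1$ is that boundary and there is no further curve $K$ along which to attach it to the rest of $\Sigma$. Thus a one-holed torus containing $C_1$ can only be all of $\Sigma$, which is the base case $g=1$ already covered by Proposition~\ref{lem:releveJ2}. More generally, when $m=1$ no $\chi=-1$ subsurface can simultaneously have $C_1$ on its boundary and be glued to the complement along a \emph{single} curve. The paper avoids this by allowing $\Sigma_1$ to be a pair of pants with boundaries $C_1,D_1,D_2$ where \emph{both} $D_1,D_2$ may be interior curves, chosen so that the components $S_1,S_2$ of $\Sigma\setminus\Sigma_1$ are disjoint; Proposition~\ref{lem:releveJ} then controls $C_1$ while keeping $D_1,D_2$ hyperbolic, and Proposition~\ref{lem:releve} is applied separately on each $S_i$. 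With this adjustment your argument goes through and matches the paper's proof.
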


\begin{proof}
The result was proved for $\chi (\Sigma) = -1$, so we can suppose that $\chi (\Sigma) \leq -2$. For a surface $\Sigma$ with $\chi(\Sigma) \leq -2$,  let $\displaystyle{\Sigma = \bigcup_{i=1}^{-\chi(\Sigma)} \Sigma_i}$ be a decomposition into pair-of-pants, such that $C_1$ is a boundary component of $\Sigma_1$. Denote the two other boundary components of $\Sigma_1$ by $D_1$ and $D_2$ and by $S_i$ the connected component of $\Sigma \setminus \Sigma_1$ that contains $D_i$ as a boundary component (the surface $S_i$ can be empty). We can choose the pair-of-pants decomposition so that $S_1$ and $S_2$ are disjoint.

Let $\phi$ be a representation in $W(\Sigma)$ with relative Euler class $e(\phi) = n \leq -\chi(\Sigma) -2$. We can assume without loss of generality that the elements $\phi (D_1), \phi (D_2)$ are in $\mathcal{H}$. Moreover as $n \leq -\chi(\Sigma) -2$ and $e^{-1}  (n)$ is connected, the representation can be chosen such that the restriction $\psi_0$ of $\phi$ to $\pi_1 (\Sigma_1)$ satisfies $e(\psi_0) = -1$. 

According to Lemma \ref{lem:releve-1}, there is a path $\{ \psi_t \}_{t\in [0,1]}$ of representation in $\Hom (\pi_1 (\Sigma_1) , G)$ starting from $\psi_0$ such that:
\begin{enumerate}
\item $\psi_t (D_1)$ and $\psi_t (D_2)$ lie within $\mathcal{H}$; 
\item If $\{ \widetilde{\phi_t} \}_{t\in [0,1]}$ is a lift of the path in $\Hom (\pi, \Gt)$ such that $\widetilde{\phi_0} (C_1) \in z^\epsilon \mathcal{J}$,  then  the path $\{z^\epsilon  \widetilde{\phi_t} (C_1) \}_{t\in [0,1]}$ satisfies the hypotheses of Proposition \ref{prop:liftsquare};
\item $e(\psi_1) = 1$.
\end{enumerate}

If $S_i$ is non-empty, we can apply the Lemma \ref{lem:releve}, and find a path $\left\{ \psi_t^{(i)} \right\}_{t\in [0,1]}$ of representations in $W (S_i)$, such that $\psi_t^{(i)} (D_i) = \psi_t (D_i)$ and $\psi_0^{(i)}$ is the restriction of $\phi$ to $\pi_1 (S_i)$.
We define the path $\{ \phi_t \}_{t\in [0,1]}$ by: 
$$\phi_t (\gamma) = \left\{ \begin{array}{ll} \psi_t (\gamma) & \mbox{ if } \gamma \in \pi_1 (\Sigma_1) \\ \psi_t^{(i)} (\gamma) & \mbox{ if } \gamma \in \pi_1 (S_i) . \end{array} \right.$$

We have $\phi_0 = \phi$ by construction. 

For $j>1$, if $C_j$ is a boundary component of $S_i$, then $\phi_t (C_j) = \psi_t^{(i)} (C_j) \in \mathcal{H}$, because $\psi_t^{(i)} \in W(S_i)$. Otherwise $C_j$ is a boundary component of $\Sigma_1$ and the Lemma \ref{lem:releve-1} gives us that $\phi_t (C_j) \in \mathcal{H}$.

We have $e(\psi_1) = 1 = e(\psi_0) +2$,  and $\psi_1^{(i)}$ is in the same connected component of $W(S_i)$ than $\psi_0^{(i)}$. Therefore $e(\psi_1^{(i)}) = e(\psi_0^{(i)})$.

The additivity formula of the Euler class gives us 
$$e (\phi_1) = e(\psi_1) + \sum_i e(\psi_1^{(i)}) = e(\psi_0) +2 + \sum_i e(\psi_0^{(i)}) = e(\phi_0) +2 .$$

The construction of the path $\{ \psi_t \}_{t\in [0,1]}$ insures that for any lift $\widetilde{\phi_t}$ in $\Hom (\pi , \Gt)$, the path $\widetilde{\phi_t} (C_1) = \widetilde{\psi_t } (C_1)$ has the required properties.
\end{proof}


\section{Connected components}\label{section:proof}

A non-orientable surface is the connected sum of an orientable surface $\Sigma$ with one or two projective planes. The idea is to consider representations whose restriction to the orientable surface $\Sigma$ is in $W (\Sigma)$.

\subsection{Surface decomposition}


The following lemma is a direct consequence of Lemma \ref{lem:pathliftchi}

\begin{lemma}\label{lem:Khyp} Let $M $ be a two-holed projective plane with fundamental group 
$$\pi = \langle A , B , C , K | A^2 = K = BC \rangle.$$
If $\phi $ is a representation in $\Hom (\pi, G)$ so that $\phi (C) \in \mathcal{H}$, then there exists a path $\{ \phi_t \}_{t\in [0,1]}$ in $\Hom (\pi, G)$ such that :
\begin{enumerate}
\item $\phi_0 = \phi$;
\item $\phi_t (B)$ is conjugated to $\phi(B)$ and $\phi_t (C)$ is conjugated to $\phi (C)$, for all $t$;
\item $ \phi_1 (K)$ is hyperbolic.
\end{enumerate}
\end{lemma}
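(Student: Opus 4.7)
The plan is to view $M$ as a pair of pants $P$ with boundary components $B$, $C$, $K$, capped along $K$ by a Möbius band with core $A$ satisfying $A^2 = K = BC$. A representation of $\pi$ is then the same data as a pair $(\phi(B), \phi(C)) \in G \times G$ with $\phi(B)\phi(C) \in J$, together with a choice of square root $\phi(A)$ of $\phi(K) = \phi(B)\phi(C)$. I would deform the pair $(\phi(B), \phi(C))$ inside their individual $G$-conjugacy classes so that the product becomes hyperbolic, and then lift the square root continuously to a family $\phi_t(A)$.

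In the generic case $[\phi(B), \phi(C)] \neq I$, lift $(\phi(B), \phi(C))$ to $(X_0, Y_0) \in \SL(2,\R)^2$ and set $(b, c, k_0) = \chi(X_0, Y_0)$. Since $\phi(C) \in \mathcal{H}$ gives $|c| > 2$, for any chosen $k^* > 2$ the linear segment
\[
\gamma_t = (b, c, (1-t) k_0 + t k^*), \qquad t \in [0,1],
\]
lies entirely outside $[-2,2]^3$, hence inside the target of the character map of Lemma~\ref{lem:pathliftchi}. That lemma produces a continuous lift $(X_t, Y_t) \in \Omega_\R$, which I project to $(B_t, C_t) \in G^2$. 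Constancy of the traces along the path, together with the fact that $X_t, Y_t \neq \pm I$ (since they do not commute), forces $B_t$ and $C_t$ to stay in their initial $G$-conjugacy classes: the trace level sets in $G$ have at most two connected components (distinguished by the sign of rotation in the elliptic case), and a continuous path cannot jump between them. By construction $\tr(B_1 C_1) = k^* > 2$, so $\phi_1(K)$ is hyperbolic.

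The degenerate cases are immediate: $\phi(B) = I$ yields $\phi(K) = \phi(C) \in \mathcal{H}$ and $\phi_t = \phi$ works; $\phi(B) = -I$ cannot occur since then $\tr \phi(K) = -\tr \phi(C) < -2$ would violate $\phi(K) \in J$; and if $\phi(B)$ commutes with $\phi(C)$ but $\phi(B) \neq \pm I$, then $\phi(B)$ lies in the hyperbolic one-parameter Cartan subgroup through $\phi(C)$, so $\phi(K)$ is already hyperbolic unless $\phi(K) = I$, and in that remaining sub-case a short conjugation path $B_t = g_t \phi(B) g_t^{-1}$ transverse to the Cartan immediately produces $\tr\phi_t(K) > 2$. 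Finally, to build $\phi_t(A)$ with $\phi_t(A)^2 = B_t C_t$, note that $k_t \geq -2$ along the path so $\phi_t(K) \in J$ throughout: when $k_0 > -2$ Proposition~\ref{prop:square}(ii) gives the explicit continuous formula $\phi_t(A) = (B_t C_t + I)/\sqrt{\tr(B_t C_t) + 2}$, with the sign fixed so that $\phi_0(A) = \phi(A)$, and when $\phi(K) = -I$ one instead invokes Proposition~\ref{prop:liftsquare}, arranging $\gamma_t$ near $t = 0$ so that $B_t C_t = g_0 R_{\theta_t} g_0^{-1}$ with $\theta_t \to \pi$ to match its hypotheses. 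I expect this last matching of the square-root lift at an initial value $\phi(K) = -I$ to be the one genuinely technical point; everything else follows more or less directly from Lemma~\ref{lem:pathliftchi}.
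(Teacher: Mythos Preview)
Your approach is exactly the paper's: hold the traces $b,c$ fixed, push $k$ linearly past $2$ using Lemma~\ref{lem:pathliftchi}, and lift the square root along the resulting path via Proposition~\ref{prop:square} (or Proposition~\ref{prop:liftsquare} at the boundary value). You are in fact more careful than the paper on two points it leaves implicit: the commuting case $[\phi(B),\phi(C)]=I$, which strictly speaking lies outside $\Omega_{\R}$, and the reason why constant traces plus continuity force condition~(2). One notational slip to clean up: ``$\phi(B)=-I$'' and ``$\phi(K)=-I$'' are meaningless in $G=\psl$, where $-I=I$; what you mean is that the chosen $\SL(2,\R)$-lift equals $-I$, and with that reading your ``$\phi(B)=-I$'' sub-case simply collapses into $\phi(B)=I$.
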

\begin{proof}
Let  $\widetilde{\phi} : \pi \rightarrow \Gt$ be a lift of the representation $\phi$. Let $b,c,k$ denote the traces of the elements $\widetilde{\phi}(B)$, $\widetilde{\phi}(C)$ and $\widetilde{\phi}(K)$. If $\phi(K)$ is already hyperbolic there is nothing to prove, thus we assume that $-2 \leq k \leq 2$. For $\epsilon > 0$, consider the path $\{ ( b_t, c_t , k_t) \}_{t\in [0,1]}$ defined by
$$b_t = b , \hspace{1cm} c_t = c, \hspace{1cm} k_t = t (2 +\epsilon) + (1-t)k . $$
As $|c| > 2$, this path never meets the set $[-2 , 2]^3 \cap \kappa^{-1} ([-2 , 2])$, and hence can be lifted to a path $(B_t , C_t) \in \Gt \times \Gt$ starting from $(B_0 , C_0)= (\widetilde{\phi}(B), \widetilde{\phi}(C))$.

The element $K_0 = \widetilde{\phi} (K) = (\widetilde{\phi}(A))^2$ belongs to $\J$. Moreover, the path $K_t = B_t C_t$ satisfies $k_t > -2$, and $[B_t , C_t ]\neq I$ for all $t > 0$. Hence the path $K_t$ lies entirely within $\J \setminus Z(\Gt)$ for all $t > 0$. 
This proves the existence of a path $A_t \in \Gt$ such that $A_t^2 = K_t$. 

It follows that $\left\{ ( A_t , B_t , C_t , K_t ) \right\}_{t\in [0,1]}$ defines a path with the desired properties.
\end{proof}

\begin{lemma}\label{lem:KAhyp}
Let $M$ be a non-orientable surface with $\chi (M) \leq -1$ and $N$ an embedded M\"obius strip inside $M$. Let $S$ be the subsurface $M \setminus N$ and $K$ the common boundary $\partial S \cap \partial N$.

Let $\phi$ be a representation in $W( M)$. There exists a path $\{ \phi_t \}_{t\in [0,1]}$ such that $\phi_1 (K)$ is hyperbolic.
\end{lemma}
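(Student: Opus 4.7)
The plan is to reduce the problem to Lemma~\ref{lem:Khyp}, which treats the case of a two-holed projective plane whose one boundary has hyperbolic image. Since $\chi(N) = 0$ and $\chi(M) \leq -1$, the complementary subsurface $S = M \setminus N$ has $\chi(S) \leq -1$ and contains $K$ as a boundary component, so there is enough room to embed an orientable pair of pants $P \subset S$ with $K$ as one of its three boundary circles. Writing the other two boundaries of $P$ as $B$ and $C$, the union $P \cup N \subset M$ is an embedded two-holed projective plane, glued along $K$, whose two outer boundaries are $B$ and $C$, exactly in the setup of Lemma~\ref{lem:Khyp}.

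Before applying Lemma~\ref{lem:Khyp}, I must arrange that $\phi(C) \in \mathcal{H}$. If $M$ has non-empty boundary, I would choose $P$ so that $C$ is a boundary component of $M$; then $\phi(C)$ is automatically hyperbolic because $\phi \in W(M)$. Otherwise (in particular if $M$ is closed), I would first deform $\phi$ to make $\phi(C)$ hyperbolic: using Lemma~\ref{lem:pathliftchi} in the spirit of Proposition~\ref{lem:releve}, one pushes the trace of $\phi(C)$ outside $[-2, 2]$ via a path-lift in the character variety of $S \setminus P$, without disturbing the conjugacy classes of the boundary components of $M$. With $\phi(C) \in \mathcal{H}$ in hand, Lemma~\ref{lem:Khyp} produces a path $\{\psi_t\}$ of representations of $\pi_1(P \cup N)$ with $\psi_0 = \phi|_{\pi_1(P \cup N)}$ and $\psi_1(K)$ hyperbolic, along which $\psi_t(B)$ and $\psi_t(C)$ remain in their respective conjugacy classes.

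It remains to extend $\{\psi_t\}$ to a path in $\Hom(\pi_1(M), G)$. The key is to choose the lift in the character map underlying the proof of Lemma~\ref{lem:Khyp} so that the pair $(\psi_t(B), \psi_t(C))$ is jointly conjugated from $(\phi(B), \phi(C))$ by a single continuous path $g_t \in G$ with $g_0 = e$; this is available on the open locus where $[\phi(B), \phi(C)] \neq I$, on which the fibers of the character map are single $G$-orbits. Defining $\phi_t$ on $\pi_1(M \setminus P)$ by conjugation by $g_t$ then yields a continuous extension to a path of representations of $\pi_1(M)$ with $\phi_1(K) \in \mathcal{H}$. The main obstacle I foresee lies in the degenerate cases where $S$ is small enough that $B$ and $C$ coincide as curves in $M$ (for instance when $S$ is a one-holed torus), or where one cannot immediately avoid the reducible locus $[\phi(B), \phi(C)] = I$: in these situations the joint-conjugation argument must be set up more delicately, possibly after a preliminary perturbation of $\phi$ to move it off the reducible locus.
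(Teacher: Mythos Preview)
Your reduction to Lemma~\ref{lem:Khyp} on the two-holed projective plane $N\cup P$ is exactly the paper's strategy, and for $\partial M\neq\emptyset$ with $C\subset\partial M$ your argument and the paper's coincide. The extension step, however, contains a genuine error. You claim one can arrange the lift in Lemma~\ref{lem:Khyp} so that the pair $(\psi_t(B),\psi_t(C))$ is \emph{jointly} conjugate to $(\phi(B),\phi(C))$ by a single path $g_t\in G$. This is impossible: along that path $\psi_t(K)=\psi_t(B)\psi_t(C)$ has trace $k_t$ moving strictly toward $2+\epsilon$, so the character triples $(b,c,k_t)$ are all distinct and the pairs lie in \emph{different} fibers of $\chi$; no conjugation can relate them. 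What Lemma~\ref{lem:Khyp} actually yields is only \emph{individual} conjugacy $\psi_t(B)=U_t\phi(B)U_t^{-1}$ and $\psi_t(C)=V_t\phi(C)V_t^{-1}$ with $U_t\neq V_t$ in general. Extending over $S\setminus P$ by a single conjugation therefore fails unless $B$ and $C$ bound \emph{separate} components of $S\setminus P$, so that one may conjugate each component by its own path. In the open case only $B$ is interior, so this is automatic. In the closed case with $\chi(S)\le -2$ the paper enforces this by choosing the pair of pants $\Sigma_1$ to be separating in $S$, with one complementary piece a one-holed torus $T$ bounded by $C$; it then first applies Goldman's Lemma~9.3 to the two-holed torus $T\cup\Sigma_1$ to make $\phi(C)$ hyperbolic while keeping both $\phi(K)$ and $\phi(B)$ in their conjugacy classes (so the M\"obius side and the far side can each be carried along by conjugation), and only afterwards invokes Lemma~\ref{lem:Khyp}. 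Your phrase ``push $\tr\phi(C)$ outside $[-2,2]$ via a path-lift in the character variety of $S\setminus P$'' does not supply this mechanism and in particular does not control the conjugacy class of $\phi(K)$.

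The degenerate case you flag, $S$ a one-holed torus (closed $M$ of non-orientable genus~$3$), is genuinely out of reach of Lemma~\ref{lem:Khyp}: there is no separating pair of pants in $S$ adjacent to $K$. The paper handles it by an entirely different route. One lifts $\phi|_{\pi_1(S)}$ and observes that $\widetilde\psi(K)=[\widetilde\psi(X),\widetilde\psi(Y)]$ lies in a translate $z^{\epsilon}\mathcal J$ (because $\phi(K)=\phi(A)^2$). Using the path-lifting for the commutator map (as in Proposition~\ref{lem:releveJ2}) one moves $\widetilde\psi_t(K)$ inside that translate of $\mathcal J$ to a hyperbolic element, and Proposition~\ref{prop:liftsquare} then supplies the continuous square root $A_t$ on the M\"obius side.
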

\begin{proof}
First, assume that $\partial M \neq \emptyset$ and let $C \subset \partial M$. Then there is an embedded three-holed sphere $\Sigma_1$ in $M$, having $C$ and $K$ as boundary component. Denote by $B$ its third boundary component. The Lemma \ref{lem:Khyp} applies to the restriction $\psi$ of $\phi$ to the surface $N \cup \Sigma_1$. Hence there exists a path $\{ \psi_t \}_{t \in [0,1]}$ such that $\psi_1 (K)$ is hyperbolic. Moreover, there exists a path $\{ U_t \}_{t \in [0,1]}$ in $\Gt$ such that $\psi_t (B) = U_t \phi (B) U_t^{-1}$. The path $\{ \phi_t \}_{t\in [0,1]}$ given by:
$$
\begin{array}{ll}
\phi_t (\gamma) = \psi_t (\gamma),  \hspace{1cm} &\mbox{if } \gamma \in \pi_1 (N \cup \Sigma_1) ,\\
\phi_t (\gamma) = U_t \phi (\gamma) U_t^{-1}, & \mbox{if } \gamma \in \pi_1 (S),
\end{array}
$$
has the desired properties.

Now, if $M$ is a closed surface. Then $S$ is a one-holed torus or $\chi (S) \leq -2$. 

If $S$ is a one-holed torus, let $\psi$ be the restriction of $\phi$ to $\pi_1 (S)$, and $\widetilde{\psi}$ a lift of this representation. Using the same arguments as in the proof of Proposition \ref{lem:releveJ2}, there exists a path $\{ \psi_t \}_{t \in [0,1]}$ joining $\psi$ to a representation $\psi'$ such that $\psi' (K)$ is hyperbolic. This path has the property that whenever $z^\epsilon \widetilde{\psi}_0 (K)$ is in $\J$, the path $\left\{ z^\epsilon \widetilde{\psi_t} (K) \right\}_{t \in [0,1]}$ lies within $\J$. Hence, there exists a path $\{ A_t \}_{t \in [0,1]}$ in $\Gt$ such that $A_t = z^\epsilon  \widetilde{\psi_t} (K)$ and this defines a path of representations $\{ \phi_t \}_{t \in [0,1]} $ in $\Hom (\pi , G)$ with the desired properties.

Otherwise, we have $\chi(S) \leq -2$. There is an embedded three-holed sphere $\Sigma_1$ in $S$ having $K$ as boundary component, such that $S \setminus \Sigma_1)$ has two connected components, one of them being a one-holed torus $T$. Denote by $B$ and $C$ the other boundary components of $\Sigma_1$. The surface $\Sigma = T \cup \Sigma_1$ is a two-holed torus with boundary components $K$ and $B$. There exists a path $\{ \psi_t \}_{t\in [0,1]}$ in $\Hom (\pi_1 (\Sigma) , G)$ starting from $\psi_0 = \phi_{|\pi_1 (\Sigma) }$ such that $\phi_1 (C)$ is hyperbolic (see Goldman \cite{goldman} Lemma 9.3). Moreover there exists path $\{ U_t \}_{t\in [0,1]}$ and $\{ V_t \}_{t\in [0,1]}$ in $\Gt$ such that 
$$\psi_t (K) = U_t \phi (K) U_t^{-1}, \hspace{1cm} \psi_t (B) = V_t \phi (B) V_t^{-1}.$$ 
The path defined by
$$
\begin{array}{ll}
\phi_t (\gamma) = \psi_t (\gamma)   \hspace{1cm}&\mbox{if } \gamma \in \Sigma ,\\
\phi_t (\gamma) = U_t \phi (\gamma) U_t^{-1}  &\mbox{if  }\gamma \in N , \\
 \phi_t (\gamma) = V_t \phi (\gamma) V_t^{-1}  &\mbox{if  }\gamma \in M\setminus (N \cup \Sigma) , 
\end{array}
$$
is such that $\phi_1 (C)$ is hyperbolic. Now we can apply Lemma \ref{lem:Khyp} to the restriction of the representation $\phi_1$ to $\pi_1 (N \cup \Sigma_1)$ and apply the same argument as in the case of an open surface.
\end{proof}


\subsection{Proof of Theorem \ref{thm:open}}

We resume now the proof of Theorem \ref{thm:open}.

First, let $M$ be a non-orientable surface of odd genus $k$. There is an embedded M\"obius strip $N$ inside $M$  such that the surface $S = M \setminus N$ is an orientable surface with Euler characteristic $\chi(S)  = \chi (M)$. Let $\phi$ and $\psi$ be two representations in $W(M)$ such that $o_2 (\phi) = o_2 (\psi)$.  We denote by $\phi '$ and $\psi '$ the restrictions to $\pi_1 (S)$ of the representations $\phi$ and $\psi$. According to Proposition \ref{lem:KAhyp}, we can assume that $\phi (K)$ and $\psi(K)$ are hyperbolic, and hence $\phi'$ and $\psi'$ are in $W(S)$.

We infer from Proposition \ref{prop:eulermax} that we can choose $\phi$ and $\psi$ such that the relative Euler classes $e(\phi ')$ and $e(\psi ')$ are in $\{ -\chi (S) , -\chi (S) -1 \}$. Indeed, we can find a path of representations $\{ \phi_t ' \}_{t\in [0,1]}$ in $\Hom (\pi_1 (S) , G)$ such that $e (\phi_1 ') \in \{ -\chi (S) , -\chi (S) -1 \}$ and $\phi_t '(K) \in \J$. Hence, according to Proposition \ref{prop:liftsquare}, we can lift the path of representations $\{ \phi_t' \}_{t\in [0,1]}$ to a path $\{ \phi_t \}_{t\in [0,1]}$ of representations in $W(M)$.

A presentation of the fundamental group is given by:
$$\pi = \left\langle A , X_1 , \dots , Y_g ,  C_1 , \dots , C_m \, | \, A^2 [X_1 , Y_1] \cdots [X_g , Y_g] C_1 \cdots C_m \right\rangle $$
where the element $A$ satisfies $A^2 = K$. Choose lifts $\widetilde{\phi (A)}, \widetilde{\phi (X_1)} , \dots , \widetilde{\phi (Y_g)}$ in $\Gt$ and let $\widetilde{\phi (C_1)}^0 , \dots , \widetilde{\phi (C_m)}^0  $ be the lifts in $\mathcal{H}_0$. By definition of the map $o_2$, we have
$$ \left( \widetilde{\phi (A)} \right)^2 \left[ \widetilde{\phi (X_1)} , \widetilde{\phi (Y_1)} \right]  \cdots \left[\widetilde{\phi (X_g)},  \widetilde{\phi (Y_g)} \right] \widetilde{\phi (C_1)}^0 \cdots \widetilde{\phi (C_m)}^0 = z^{2n + o_2 (\phi)}$$
for some $n \in \Z$.

On the other hand, if $\widetilde{\phi (K)}^0$ is the lift of $\phi (K)$ in $\mathcal{H}_0$ then by definition of the Euler class we have:
$$ \widetilde{\phi (K)}^0 \left[ \widetilde{\phi (X_1)} , \widetilde{\phi (Y_1)} \right]  \cdots \left[ \widetilde{\phi (X_g)},  \widetilde{\phi (Y_g)} \right] \widetilde{\phi (C_1)}^0 \cdots \widetilde{\phi (C_m)}^0 = z^{e(\phi ')}.$$

Hence, we obtain
$$(\widetilde{\phi (A)})^2 = z^{N} \widetilde{\phi (K)}^0$$
with $N =e(\phi ') - o_2 (\phi) - 2n $. This element belongs to $\J \cap \mathcal{H}_{N}$, thus $N$ is even. This implies that $\bar{e}(\phi ') = o_2 (\phi)$ in $\Z /2 \Z$, where $\bar{ e} = e (\mbox{mod } 2)$. The same applies to $\psi$ and hence $\bar{e}(\psi) = o_2 (\psi)$ in $\Z /2 \Z$. We infer that $e(\phi ') = e(\psi ')$, which implies that $\phi'$ and $\psi'$ are in the same connected component of $W(S)$.

Therefore there exists a path $\{ \phi_t ' \}_{t\in [0,1]}$ joining $\phi '$ to $\psi '$ in $W(S)$. The fact that we can find a path joining $\phi$ to $\psi$ in $W(M)$ is a consequence of the following Lemma:

\begin{lemma}\label{lem:extension}
Let $M$ be a non-orientable surface with $\chi (M) \leq -1$. Let $N \subset M $ be an embedded Möbius strip and $S = M \setminus N$. Let $\phi$ and $\psi$ be representations in $W(M)$, and let $\phi '$ and $\psi '$ be their restrictions to $\pi_1 (S)$. 

If $\phi '$ and $\psi '$ are in $W(S)$, then a path joining $\phi '$ and $\psi '$ in $W(S)$ can be extended to a path joining $\phi$ and $\psi$ in $W(M)$.
\end{lemma}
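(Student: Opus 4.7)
The Möbius strip $N$ has fundamental group $\pi_1(N) = \langle A \rangle \cong \Z$ with boundary circle $\partial N = K$ freely homotopic to $A^2$; by van Kampen's theorem, $\pi_1(M) = \pi_1(S) *_{\pi_1(K)} \pi_1(N)$ with the amalgamation $K = A^2$. A representation of $\pi_1(M)$ is therefore equivalent to the data of a representation $\rho : \pi_1(S) \to G$ together with an element $\rho(A) \in G$ satisfying $\rho(A)^2 = \rho(K)$ in $\psl$. The extension problem reduces to continuously choosing such a square root along the given path in $W(S)$.

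Given the path $\{\phi_t'\}_{t\in [0,1]}$ in $W(S)$, the element $\phi_t'(K)$ is hyperbolic for every $t$. Among its two lifts in $\SL(2,\R)$, precisely one has trace greater than $2$ (the other has trace less than $-2$); call it $\tilde{K}_t$. Then $\tilde{K}_t$ lies in $J \setminus \{-I\}$ and depends continuously on $t$. By Proposition \ref{prop:square}(ii) the formula
\[
A_t := \frac{\tilde{K}_t + I}{\sqrt{\tr(\tilde{K}_t) + 2}} \in \psl
\]
produces the unique element of $\psl$ with $Q(A_t) = \tilde{K}_t$, and varies continuously with $t$. Define $\phi_t : \pi_1(M) \to G$ by extending $\phi_t'$ with $\phi_t(A) := A_t$. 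Since $Q(A_t)$ projects to $\phi_t'(K)$ in $\psl$, the amalgamation relation $A^2 = K$ is respected, so $\phi_t$ is a continuous path in $\Hom(\pi_1(M), G)$.

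For endpoint matching, observe that $\phi(A)^2 = \phi(K)$ in $\psl$ means $Q(\phi(A))$ is an element of $J$ projecting to $\phi(K)$; since the negative-trace lift of $\phi(K)$ is not in $J$, necessarily $Q(\phi(A)) = \tilde{K}_0$, and the uniqueness clause of Proposition \ref{prop:square}(ii) forces $\phi(A) = A_0$. Thus $\phi_0 = \phi$, and the same argument gives $\phi_1 = \psi$. Finally, the boundary components of $M$ are precisely those of $S$ other than $K$, and on each such component $\phi_t$ agrees with $\phi_t'$, which is hyperbolic by hypothesis; hence $\{\phi_t\}$ is a path in $W(M)$ from $\phi$ to $\psi$. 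No real obstacle appears here: the argument is a direct application of the explicit continuous square root in Proposition \ref{prop:square}, with endpoint compatibility ensured automatically by its uniqueness.
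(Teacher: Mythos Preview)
Your proof is correct and follows essentially the same approach as the paper: both use Proposition~\ref{prop:square} to obtain the unique continuous square root $A_t$ of the hyperbolic path $\phi_t'(K)$, with endpoint matching forced by the uniqueness clause. You are slightly more explicit than the paper in selecting the positive-trace lift in $\SL(2,\R)$ and in verifying that the resulting path stays in $W(M)$, but the argument is the same.
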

\begin{proof}
Let $A$ be the generator of $\pi_1 (N) \hookrightarrow \pi_1 (M)$, so that $A^2 = K$ is the homotopy class of the common boundary of $N$ and $S$. In particular the condition $\phi' \in W(S)$ implies that $\phi' (K)$ is hyperbolic. Let $\{ \phi_t ' \}_{t\in [0,1]}$ be the path joining $\phi '$ and $\psi '$ in $W(S)$. There exists $A_0$ in $G$ such that $\phi_0 ' (K) = A_0^2$. Moreover for any $t \in [0 , 1]$, the element $\phi_t ' (K)$ is hyperbolic. Hence according to Proposition \ref{prop:square}, for any $t \in [0,1]$ there exists a unique element $A_t$ such that $A_t^2 = \phi_t ' (K)$. We have the unicity of $A_t$ and this proves that $A_0 = \phi (A)$ and $A_1 = \psi (A)$. Thus, we can define the path $\phi_t$ in $W(M)$ by:
\begin{align*}
\phi_t (\gamma) & = \phi_t ' (\gamma) \mbox{ if } \gamma \in \pi_1 (S),\\
\phi_t (A) & = A_t .
\end{align*}
This path joins $\phi$ to $\psi$ in $W(M)$.
\end{proof}

\medskip

Now, let $M$ be a non-orientable surface of even genus $k$ and $N$ an embedded M\"obius strip inside $M$ with boundary $K$. The surface $S = M \setminus N$ is a non-orientable surface of odd genus. Let $\phi$ and $\psi$ be two representations in $W(M)$ such that $o_2 (\phi) = o_2 (\psi)$. According to Lemma \ref{lem:KAhyp}, we can assume that $\phi (K)$ and $\psi(K)$ are hyperbolic. 

We consider the restrictions $\phi '$ and $\psi '$ of $\phi$ and $\psi$ to $\pi_1 (S)$. The previous case ensures that we can find a path $\{ \phi_t ' \}_{t\in [0,1]}$ in $W(S)$ joining $\phi'$ to $\psi'$. Then, Lemma \ref{lem:extension} gives us a path joining $\phi$ to $\psi$ in $W(M)$ which concludes the proof.

\end{document}